\newcommand{\R}{{\mathbb R}}
\newcommand{\N}{{\mathbb N}}
\newcommand{\ph}{{\varphi}}
\newcommand{\Gm}{{\Gamma}}
\newcommand{\cE}{\mathcal{E}}
\newcommand{\dEl}{D(\mathcal{E})_{\rm loc}}
\newcommand{\Ee}{\mathcal{E}_{\rm e}}
\newcommand{\av}[1]{\lVert#1\rVert}
\newtheorem{thm}{Theorem}[section]
\newtheorem{lemma}[thm]{Lemma}
\theoremstyle{definition}
\newtheorem{definition}[thm]{Definition}
\newtheorem*{remark}{Remark}
\newcommand{\Hmm}[1]{\leavevmode{\marginpar{\tiny%
$\hbox to 0mm{\hspace*{-0.5mm}$\leftarrow$\hss}%
\vcenter{\vrule depth 0.1mm height 0.1mm width \the\marginparwidth}%
\hbox to
0mm{\hss$\rightarrow$\hspace*{-0.5mm}}$\\\relax\raggedright #1}}}
\begin{document}

\title{On $L^p$ Liouville theorems for Dirichlet forms}

\author{Bobo Hua}
\address{School of Mathematical Sciences, LMNS, Fudan University, Shanghai 200433, China; Shanghai Center for Mathematical Sciences, Jiangwan Campus, Fudan University, No. 2005 Songhu Road, Shanghai 200438, China.}
\email{bobohua@fudan.edu.cn}


\author{Matthias Keller}
\address{Institut für Mathematik, Universität Potsdam, 14476 Potsdam, Germany.} \email{matthias.keller@uni-potsdam.de}

\author{Daniel Lenz}
\address{Institut für Mathematik, Friedrich-Schiller-Universität Jena, 07743 Jena, Germany.} \email{daniel.lenz@uni-jena.de}

\author{Marcel Schmidt}
\address{Mathematisches Institut, Universität Leipzig, 04109 Leipzig, Germany.} \email{marcel.schmidt@math.uni-leipzig.de}

\begin{abstract}
	We study harmonic functions for general Dirichlet forms. First we review consequences of Fukushima's ergodic theorem for the harmonic functions in the domain of the $ L^{p} $ generator.
	Secondly we prove analogues of Yau's and  Karp's Liouville theorems for weakly harmonic functions. Both say  that weakly harmonic functions which satisfy  certain $ L^{p} $   growth criteria must be constant. As consequence we  give an integral criterion for recurrence.
\end{abstract}
\maketitle

\tableofcontents

\section{Introduction}

Liouville theorems for harmonic functions have a long tradition and circle around the idea that a harmonic function which satisfies certain boundedness conditions must be constant. The first result of this type for analytic functions was proven by Cauchy in 1844, see \cite{Cauchy}. In 1847 Liouville presented the result in a lecture  and that is how the name arose. Since then numerous results in various contexts were proven.

Our setting is the one of Dirichlet forms. This setting includes on the one hand well studied objects such as Laplacians on (sub) Riemannian manifolds but also non-local operators arising from jump processes.

We start by stating the results and refer for the definitions to Section~\ref{s:FET} and Section~\ref{s:setup}. For a general background on Dirichlet forms we refer to \cite{FOT}. Let $\mathcal{E}$ be a  Dirichlet form on $L^{2}(m)$, where $m$ is a $\sigma$-finite measure. The associated self-adjoint operator generates a Markovian $C_0$-semigroup on $L^2(m)$, which for $p \in [1,\infty)$ extends to a Markovian $C_0$-semigroup on   $L^p(m)$.

We first present a consequence of Fukushima's ergodic theorem \cite{Fuk82} for harmonic functions in the domain of the generator of the $L^p$-semigroup, which we also refer to as the $L^p$-generator of the Dirichlet form.

\begin{thm}[Basic $ L^{p} $-Liouville theorem]\label{t:Fuk} For  $p\in(1,\infty)  $ any harmonic function in the domain of the $L^{p}$-generator of an irreducible Dirichlet form is constant. In particular, in case there is a non-trivial harmonic function in the domain of the $L^p$-generator, then $m$ is a finite measure.	
\end{thm}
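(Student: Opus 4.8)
The plan is to reduce the harmonicity of $u$ to invariance under the semigroup and then to read off from Fukushima's ergodic theorem that the only invariant functions available are constants.

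First I would show that a harmonic function $u$ in the domain of the $L^{p}$-generator is fixed by the semigroup. Write $(T_{t})$ for the Markovian $C_{0}$-semigroup on $L^{p}(m)$ and $L_{p}$ for its generator. Since $u \in D(L_{p})$, the orbit $t \mapsto T_{t}u$ is differentiable in $L^{p}(m)$ with $\frac{d}{dt}T_{t}u = L_{p}T_{t}u = T_{t}L_{p}u$. Harmonicity means $L_{p}u = 0$, so the derivative vanishes identically and $T_{t}u = T_{0}u = u$ for all $t \geq 0$. Thus $u$ is an invariant function for $(T_{t})$.

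Next I would invoke Fukushima's ergodic theorem for the irreducible form. For $p \in (1,\infty)$ the space $L^{p}(m)$ is reflexive, which is exactly what makes the mean ergodic behaviour of the Markovian semigroup available; this is the reason the endpoints $p = 1,\infty$ are excluded. The ergodic theorem identifies the long-time behaviour of the orbit, equivalently the limit of the Cesàro averages $\frac{1}{t}\int_{0}^{t}T_{s}u\,ds$, with the projection of $u$ onto the space of invariant functions. Irreducibility forces this invariant space to consist only of (a.e.) constant functions, so that the limit equals $\frac{1}{m(X)}\int_{X}u\,dm$ when $m(X) < \infty$ and equals $0$ when $m(X) = \infty$, where $X$ denotes the underlying space.

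Finally I would conclude. Since $T_{t}u = u$, the orbit is stationary and its Cesàro averages are all equal to $u$; comparing with the ergodic limit shows that $u$ is (a.e.) constant, with $u = 0$ in the infinite measure case. For the supplementary statement, a nonzero constant $c$ belongs to $L^{p}(m)$ if and only if $m(X) < \infty$, since $\lVert c\rVert_{p}^{p} = |c|^{p}m(X)$; hence the existence of a non-trivial harmonic function in $D(L_{p})$ forces $m$ to be finite. The main obstacle is the correct invocation of Fukushima's ergodic theorem, in particular the identification of the invariant subspace with the constants via irreducibility and the clean treatment of the dichotomy between finite and infinite total mass; the reduction of harmonicity to semigroup invariance and the final $L^{p}$-membership argument for constants are comparatively routine.
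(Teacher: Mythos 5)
Your first and last steps are exactly the paper's: harmonicity plus $u\in D(L_p)$ gives $\frac{d}{dt}T_tu = -T_tL_pu=0$, hence $T_tu=u$ (the paper phrases this via uniqueness for the abstract Cauchy problem, Lemma~\ref{eigenfunction-semigroup}), and the ``in particular'' clause is the routine observation that a nonzero constant lies in $L^p(m)$ only if $m(X)<\infty$. The problem is the middle step, which you yourself flag as ``the main obstacle'' but then do not carry out. The abstract mean ergodic theorem on the reflexive space $L^p(m)$ does give convergence of the Ces\`aro averages to a projection onto the fixed space of $(T_t^p)$, but it does not tell you what that fixed space is. Irreducibility is a property formulated on the $L^2$-level (the kernel of $L$ is spanned by a function of fixed sign, which is then constant by Lemma~\ref{eigenfunction-constant}); transferring this to the statement that the \emph{$L^p$}-fixed space consists only of multiples of that constant is precisely where the paper does real work. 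A fixed point $f\in L^p(m)$ need not lie in $L^2(m)$, so one cannot simply quote the $L^2$ result. The paper's route (Lemma~\ref{convergence-q}) is: spectral theorem on $L^2$ gives $T_tf\to\langle\Phi,f\rangle\Phi$, then Littlewood's interpolation inequality $\av{g}_p\le\av{g}_r^{1-\theta}\av{g}_2^{\theta}$ together with the uniform $L^1$- and $L^\infty$-contractivity of $(T_t)$ upgrades this to $L^p$-convergence on the dense set $L^1\cap L^2\cap L^\infty$, and uniform boundedness extends it to all of $L^p(m)$. Some argument of this kind is needed; as written, your appeal to ``irreducibility forces the invariant space to consist of constants'' on $L^p$ is an assertion, not a proof.

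A second, related inaccuracy: the ergodic limit you write down, $\frac{1}{m(X)}\int_X u\,dm$ whenever $m(X)<\infty$, is only correct when $\cE$ is in addition conservative. If $m(X)<\infty$ but $\cE$ is not conservative, the constants are not invariant and the projection is $0$ (this is the content of the paper's discussion of $\Phi$). This does not damage the conclusion that $u$ is constant, but it is a symptom of the missing identification step: Fukushima's theorem as cited assumes conservativeness, and the theorem you are proving does not, which is exactly why the paper substitutes its own analytic argument for a bare citation.
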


Next, we restrict ourselves to regular Dirichlet forms where one has an  intrinsic metric $ \rho $ in the sense of \cite{FLW} for which  all distance balls are precompact. The first result is an analogue of Yau's $L^{p}$-Liouville type theorem \cite{Yau76}. For some nonlocal operators on Euclidean space a related result is contained in  \cite{MU11}.  Speaking about \emph{(sub)harmonic} functions we refer here to weakly (sub)harmonic functions, see Definition~\ref{d:harmonic}.

\begin{thm}[Yau's $L^{p}$-Liouville theorem] \label{c:Yau} Let $ \mathcal{E} $ be an irreducible regular Dirichlet form without killing with an intrinsic metric for which all distance balls are precompact. Let $p\in(1,\infty)$ and let $ f \in  L^{p}(m)$  be  
	a non-negative subharmonic function. Then $f$ is constant if one of the following additional conditions is satisfied:
\begin{enumerate}[(a)]
 \item $1 < p \leq 2$.
 \item $m$ is finite.
 \item $p > 2$ and $f \in L^q(m)$ for some $q \in [2p-2,\infty]$. 
 \item $p > 2$, the intrinsic metric  has finite jump size and $f \in L^{2p-2}_{\rm loc}(m)$.
\end{enumerate}
\end{thm}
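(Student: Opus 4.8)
The plan is to adapt Yau's classical integration-by-parts / Caccioppoli argument to the Dirichlet form setting, using the intrinsic metric $\rho$ to build cutoff functions whose energy is controlled by the gradient of distance (which is bounded by one in the intrinsic metric). The core object will be the function $f^{p-1}$ (or $f^{p/2}$), tested against a cutoff $\eta$ supported on a distance ball $B_R$ and equal to $1$ on $B_{R/2}$. Since $f$ is subharmonic, $\mathcal{E}(f, \eta^2 f^{p-1}) \le 0$ (for a suitable chain-rule-admissible test function), and expanding this via the Leibniz rule for the carré du champ should produce, after a Cauchy–Schwarz step, a Caccioppoli-type estimate of the form
\begin{equation*}
\int_{B_{R/2}} d\Gamma(g) \;\le\; \frac{C}{R^2}\int_{B_R} f^p \, dm,
\end{equation*}
where $g = f^{p/2}$ and $d\Gamma$ denotes the energy measure. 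Because $f \in L^p(m)$, the right-hand side tends to $0$ as $R \to \infty$, forcing $d\Gamma(g) = 0$ globally, and then irreducibility (together with the absence of killing) upgrades this to $f$ being constant.

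First I would make precise the weak subharmonicity in terms of the energy measure and establish the relevant chain and product rules in $\dEl$, so that $f^{p-1}$ and $\eta^2 f^{p-1}$ are legitimate test functions — this is where the precompactness of balls and $f \in L^p_{\rm loc}$ enter, guaranteeing the truncations lie in the right local domain. Next I would carry out the integration by parts, being careful that the Leibniz expansion of $\mathcal{E}(f,\eta^2 f^{p-1})$ in the \emph{nonlocal} case is only an inequality rather than an identity: for jump forms one must control a cross term coming from $\left(f(x)-f(y)\right)\left(\eta^2 f^{p-1}(x)-\eta^2 f^{p-1}(y)\right)$, and the elementary pointwise inequality relating this to $\bigl(f^{p/2}(x)-f^{p/2}(y)\bigr)^2$ is exactly where the case distinctions on $p$ arise. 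This pointwise lemma holds cleanly for $1<p\le 2$, which explains case (a); for $p>2$ the convexity estimate leaves a remainder involving higher powers of $f$, which is why one needs the extra integrability $f \in L^q$ with $q \ge 2p-2$ in case (c), or the finite-jump-size hypothesis plus $f \in L^{2p-2}_{\rm loc}$ in case (d) to absorb the long-range jumps. Case (b) with $m$ finite is the easiest, since $L^p \subset L^2$ and one can reduce to a standard $L^2$-type argument.

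The main obstacle I expect is precisely this nonlocal cross-term: in the local (strongly local) case Yau's proof is a clean chain rule on $d\Gamma$, but a generic Dirichlet form has a jump part, and the quantity $\bigl(f(x)-f(y)\bigr)\bigl(f^{p-1}(x)-f^{p-1}(y)\bigr)$ does not dominate $\bigl(f^{p/2}(x)-f^{p/2}(y)\bigr)^2$ uniformly once $p>2$. Handling this requires splitting the jump measure into short jumps (where a local Taylor-type estimate applies) and long jumps (which are controlled either by global $L^q$ integrability or, under finite jump size, are simply absent), and this split is the technical heart of conditions (c) and (d). I would therefore isolate the needed algebraic inequality as a preliminary lemma, prove the Caccioppoli estimate first in the strongly local case to fix ideas, and then treat the jump contribution separately. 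Finally, once $d\Gamma(f^{p/2}) = 0$ is established, I would invoke irreducibility to conclude that $f^{p/2}$, hence $f$, is $m$-almost everywhere constant, using that the form has no killing so that constants are genuinely harmonic and the energy measure characterizes them.
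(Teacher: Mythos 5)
Your plan follows essentially the same route as the paper: test subharmonicity against $\eta^{2}$ times a (truncated) power $f^{p-1}$ with an intrinsic-metric Lipschitz cutoff, derive a Caccioppoli-type inequality with right-hand side $C(R-r)^{-2}\av{f 1_{B_{R+s}\setminus B_{r-s}}}_{p}^{p}$, let $R\to\infty$, and conclude via irreducibility that the vanishing of the weighted energy forces $f$ to be constant. The only place your diagnosis differs from what actually happens is the role of the conditions (c)/(d): the pointwise domination of the jump cross-term is not the obstacle for $p>2$ (the inequality $\nabla_{xy}g^{p-1}\ge ((p-1)\wedge 1)\,(g(x)\vee g(y))^{p-2}\nabla_{xy}g$ used in the paper holds for all $p>1$, and no short/long jump splitting is performed); rather, the extra $L^{q}$-integrability with $q\ge 2p-2$ is needed so that the error terms coming from the truncation $f\wedge n$ vanish as $n\to\infty$, via a Chebyshev bound of the form $n^{p-1}m(\{f\ge n\}\cap B)^{1/2}\le n^{p-1-q/2}\av{f1_{B}}_{q}^{q/2}$, which for $p\le 2$ is already covered by $f\in L^{p}(m)$.
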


%

The next result is an analogue to Karp's $L^{p}$-Liouville theorem \cite{Karp82b}, which is found for strongly local regular Dirichlet forms in \cite{Sturm94} and for graphs in \cite{HuaJost13,HuaKeller13}.
Here we additionally need that the intrinsic metric has finite jump size, see Section~\ref{s:setup}. We denote the distance balls with radius $ r\ge0 $ about a fixed point $ o \in X $ with respect to the intrinsic metric $ \rho $ by $ B_{r}=\{x\in X\mid \rho(x,o)\leq r \} $.

\begin{thm}[Karp's $L^{p}$-Liouville theorem] \label{t:Karp}Let $ \mathcal{E} $ be an irreducible regular Dirichlet form without killing with an intrinsic metric for which all distance balls are precompact and the jump size is finite. Let $p \in (1,\infty)$ and let $q = \max\{p,2p-2\}$. Then every 
	non-negative subharmonic function $f\in L^{q}_{\rm loc}(m)$ satisfying
	\begin{align*}
		\int_{r_{0}}^{\infty} \frac{r}{\|f1_{B_{r}}\|_{p}^{p}}dr=\infty
	\end{align*}
	for all $r_0> 0  $ is constant.
\end{thm}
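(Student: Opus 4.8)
The plan is to run a Caccioppoli-type energy estimate against an optimally chosen family of radial cutoff functions and to feed the resulting inequality into a discrete (dyadic) version of Karp's one-variable lemma. Throughout write $u = f^{p/2}$, so that $\av{f1_{B_{r}}}_{p}^{p} =: K(r)$, and let $\Gamma$ denote the energy measure of $\mathcal{E}$. I argue by contradiction: assuming $f$ is non-constant, irreducibility together with the absence of killing forces $u$ to be non-constant as well, so that its energy measure cannot vanish on all balls; thus there is $r_{0}$ with $c_{0} := \Gamma(u)(B_{r_{0}}) > 0$. The goal is to contradict the divergence of $\int_{r_{0}}^{\infty} r\,\av{f1_{B_{r}}}_{p}^{-p}\,dr$.

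The analytic heart is the energy (Caccioppoli) inequality: for every cutoff $\eta$ that is Lipschitz with respect to $\rho$ and compactly supported one should have
\begin{equation*}
\int \eta^{2}\, d\Gamma(u) \le C_{p} \int f^{p}\, d\Gamma(\eta),
\end{equation*}
and, since $\rho$ is intrinsic, $d\Gamma(\eta) \le (\operatorname{Lip}\eta)^{2}\, dm$; for a radial cutoff $\eta = \psi(\rho(\cdot,o))$ this yields $\int \eta^{2}\, d\Gamma(u) \le C_{p} \int f^{p} |\psi'(\rho)|^{2}\,dm$. To obtain it one inserts $\eta^{2} f^{p-1}$ into the weak subharmonicity inequality $\mathcal{E}(f, \eta^{2} f^{p-1}) \le 0$, and uses the Leibniz and chain rules for energy measures together with the elementary pointwise bounds comparing $(a^{p/2}-b^{p/2})^{2}$ with $(a-b)(a^{p-1}-b^{p-1})$. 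This is where the hypotheses enter: $f \in L^{q}_{\rm loc}(m)$ with $q = \max\{p,2p-2\}$ guarantees that $f^{p-1}$ lies in $\dEl$ and that the jump terms are locally integrable, while the finite jump size makes the estimate local at a fixed scale $s_{0}$ and lets one bound the cutoff increments by $|\eta(x)-\eta(y)| \le (\operatorname{Lip}\eta)\,s_{0}$ on the support of the jump measure, so that the right-hand side can genuinely be confined to the annulus where $\psi' \neq 0$. I expect this step to be the main obstacle, just as in the proof of Theorem~\ref{c:Yau}, since the non-local part of $\Gamma$ must be dominated by the same $dm$-integral as the local part.

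Granting the energy inequality, I then optimize the cutoff over a dyadic sequence of radii. Fix $r_{k} = 2^{k} r_{0}$ and let $\eta$ be the radial cutoff equal to $1$ on $B_{r_{0}}$, vanishing outside $B_{r_{n}}$, and affine in $\rho$ on each annulus $B_{r_{k+1}}\setminus B_{r_{k}}$ with total decrement $1$ distributed as weights $a_{k} \ge 0$, $\sum_{k} a_{k} = 1$. Inserting this and minimizing the resulting sum $\sum_{k} a_{k}^{2}(r_{k+1}-r_{k})^{-2} K(r_{k+1})$ over the $a_{k}$, whose minimum is $\big(\sum_{k} (r_{k+1}-r_{k})^{2} K(r_{k+1})^{-1}\big)^{-1}$, gives, since $\Gamma(u)(B_{r_{0}}) = c_{0}$,
\begin{equation*}
\sum_{k\ge 0} \frac{(r_{k+1}-r_{k})^{2}}{K(r_{k+1})} \le \frac{C_{p}}{c_{0}} < \infty.
\end{equation*}
With $r_{k+1}-r_{k} = r_{k}$ this reads $\sum_{k} 4^{k} r_{0}^{2} K(2^{k+1}r_{0})^{-1} < \infty$; a shift of index then bounds $\sum_{k} r_{k}^{2} K(r_{k})^{-1}$, and comparing the integral to this sum via $\int_{r_{k}}^{r_{k+1}} r K(r)^{-1}\,dr \le 2 r_{k}^{2} K(r_{k})^{-1}$ (by monotonicity of $K$) shows that $\int_{r_{0}}^{\infty} r\,\av{f1_{B_{r}}}_{p}^{-p}\,dr < \infty$, contradicting the hypothesis. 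Hence $f$ is constant. A short separate remark handles the degenerate cases, such as $K$ bounded, where the conclusion already follows from the $L^{p}$-statement underlying Theorem~\ref{c:Yau}.
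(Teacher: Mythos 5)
Your argument is correct in outline, but it takes a genuinely different route from the paper. The paper does not optimize cutoff functions: it first proves a refined, ``squared'' Caccioppoli inequality (Lemma~\ref{lemma:additional inequality}) in which the \emph{square} of the weighted energy up to radius $R$ is bounded by $\frac{C}{(R-r)^2}\av{f1_{B_{R+s}\setminus B_{r-s}}}_p^p$ times the weighted energy of $f$ on the annulus alone, and then runs Karp's telescoping trick: with $Q_n$ the energy up to radius $R_n=2^nR$ and $v_n$ the $L^p$-mass of the $n$-th annulus, the chain $Q_{n-1}Q_n\le Q_n^2\le \frac{16Cv_n}{R_n^2}(Q_n-Q_{n-1})$ forces $\sum_n R_n^2/v_n\le 16C/Q_1<\infty$ unless $Q_1=0$, after which Lemma~\ref{lemma:irreducible constant} gives constancy. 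You instead use only the plain Caccioppoli inequality $\int\eta^2\,d\Gamma(u)\le C_p\int f^p\,d\Gamma(\eta)$ and recover the sharp growth condition by minimizing over piecewise affine radial cutoffs, which produces the reciprocal of the same dyadic series $\sum_k(r_{k+1}-r_k)^2/K(r_{k+1})$ as a capacity-type quantity; the reduction of the integral condition to that dyadic sum is identical in both arguments. What your route buys is conceptual economy (no refined inequality, no telescoping, and a transparent link to the capacity characterization of parabolicity); what it costs is that the Caccioppoli inequality must now be established for cutoffs whose slope changes from annulus to annulus, and in the non-local setting a jump of length at most $s$ straddling two annuli sees both slopes, so the right-hand side localizes to the intended annulus only after enlarging by $s$ and using that dyadic annuli have comparable widths and that $K$ is monotone. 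That bookkeeping does go through, but it is precisely where the hypotheses enter --- finite jump size, and $f\in L^{q}_{\rm loc}(m)$ with $q=\max\{p,2p-2\}$ to justify inserting $\eta^2f^{p-1}$ via the truncations $f\wedge n$ and $\cdot\vee j^{-1}$ as in Lemma~\ref{l:keyestimate} --- and your sketch leaves it at the level of a plan. Two minor points: the positivity of $c_0=\Gamma(u)(B_{r_0})$ for non-constant $f$ should be drawn from Lemma~\ref{lemma:gamma 0 yields constant} (through the comparison of $(u(x)-u(y))^2$ with $(f(x)\vee f(y))^{p-2}(f(x)-f(y))^2$ as in Lemma~\ref{lemma:irreducible constant}), not from irreducibility alone; and the closing remark about degenerate cases is unnecessary, since a bounded $K$ makes the dyadic sum diverge and the same contradiction applies.
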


Other than in the above mentioned references in the case $p > 2$ we need some more integrability than $L^p$ for the analogue of Yau's theorem or $L^p_{\rm loc}$ for the analogue of Karp's theorem. The reason for this is a technical issue concerning the existence of certain integrals in our proof. In many concrete applications $L^\infty_{\rm loc}$-integrability (which is sufficient for our results) of  (sub)harmonic functions is known. This follows either from hypellipticity of the corresponding operators (which yields smoothness of harmonic functions) or, more generally, from local Hölder estimates for nonnegative subharmonic functions deduced by De Giorgi-Nash-Moser iteration. In this sense, the additional integrability assumptions we make can be seen as rather mild assumptions (at least when the jump size is finite).

In many special situations the positive and negative part of harmonic functions are positive subharmonic functions. This is for example the case for strongly local Dirichlet forms or Dirichlet forms on discrete sets. In these cases the results above direclty imply the corresponding results for harmonic functions.

We can use Karp's Liouville theorem to prove an integral criterion for recurrence.

\begin{thm}[Recurrence] \label{c:recurrence} Let $ \mathcal{E} $ be an irreducible regular Dirichlet form without killing with an intrinsic metric for which all distance balls are precompact and the jump size is finite. 
If
	\begin{align*}
		\int_{r_{0}}^{\infty}\frac{r}{m(B_{r})}dr =\infty
	\end{align*}
	for some $r_0 > 0$, then $\mathcal{E}$ is recurrent. 
\end{thm}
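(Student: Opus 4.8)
The plan is to argue by contraposition and reduce the claim to Karp's $L^p$-Liouville theorem (Theorem~\ref{t:Karp}). Suppose that $\mathcal{E}$ is \emph{not} recurrent. Since $\mathcal{E}$ is irreducible, the recurrence--transience dichotomy forces $\mathcal{E}$ to be transient. The idea is then to manufacture out of transience a bounded, nonconstant, nonnegative weakly subharmonic function whose $L^p$-growth is dominated by the volume $m(B_r)$, so that divergence of $\int_{r_0}^\infty r/m(B_r)\,dr$ triggers Theorem~\ref{t:Karp} and produces a contradiction.

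To build such a function I would use the $0$-order equilibrium potential. Fix a nonempty relatively compact open set $B$; since $\mathcal{E}$ is regular and $\supp m = X$ its capacity is positive, and since $\mathcal{E}$ is transient it is also finite, so the equilibrium potential $e_B \in \dEe$ exists with $0 \le e_B \le 1$, with $e_B = 1$ quasi-everywhere on $B$, and with $\mathcal{E}(e_B,e_B) = \mathrm{Cap}(B) > 0$ (see \cite{FOT}). In particular $e_B$ is excessive, hence $\mathcal{E}(e_B,\phi) \ge 0$ for all nonnegative $\phi \in D(\mathcal{E}) \cap C_c(X)$, and $e_B$ is nonconstant because its energy is positive. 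Now set $f := 1 - e_B$. As the form has no killing, $1 \in \dEl$ with $\mathcal{E}(1,\phi) = 0$, whence $\mathcal{E}(f,\phi) = -\mathcal{E}(e_B,\phi) \le 0$; thus $f \in \dEl$ is weakly subharmonic in the sense of Definition~\ref{d:harmonic}. Moreover $0 \le f \le 1$, the function $f$ is nonconstant, and being bounded it lies in $L^q_{\rm loc}(m)$ for every $q$ because distance balls are precompact and hence of finite measure.

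It then remains to verify the integral hypothesis of Theorem~\ref{t:Karp}. Fix any $p \in (1,\infty)$ and the associated $q = \max\{p,2p-2\}$. Because $0 \le f \le 1$ we have $\|f 1_{B_r}\|_p^p = \int_{B_r} f^p\,dm \le m(B_r)$, so that
\begin{align*}
	\int_{r_0}^{\infty} \frac{r}{\|f 1_{B_r}\|_p^p}\,dr \ge \int_{r_0}^{\infty} \frac{r}{m(B_r)}\,dr = \infty
\end{align*}
for every $r_0 > 0$; here the divergence, assumed for some $r_0$, holds for all $r_0$ since $m(B_r) \in (0,\infty)$ for $r>0$ makes finiteness of the integral a pure tail property. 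Theorem~\ref{t:Karp} therefore applies to $f$ and forces $f$ to be constant, contradicting its nonconstancy. Hence $\mathcal{E}$ must be recurrent.

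The main obstacle I anticipate is the construction and justification of $f$: one must ensure that the equilibrium potential genuinely furnishes a \emph{bounded}, \emph{nonconstant}, nonnegative, weakly subharmonic function in exactly the sense of Definition~\ref{d:harmonic}. Concretely, this means checking that excessiveness translates into the weak inequality $\mathcal{E}(e_B,\phi) \ge 0$ against compactly supported test functions, that $e_B$ lies in the local domain $\dEl$, and that $1 \in \dEl$ with vanishing energy thanks to the absence of killing. Once this bridge between the potential-theoretic notion of transience and the weak subharmonicity underlying Theorem~\ref{t:Karp} is established, the volume comparison and the invocation of Karp's theorem are routine.
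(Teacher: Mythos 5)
Your overall strategy is sound and is, at its core, the paper's own: both arguments reduce the recurrence statement to Karp's theorem by producing a bounded, non-negative, non-constant subharmonic function of the form $f = 1 - h$ with $h$ excessive, and both conclude via $\av{f1_{B_r}}_p^p \leq m(B_r)$ exactly as you describe. The difference lies in how the excessive function is sourced. The paper never passes through transience: it invokes an abstract characterization of recurrence from \cite{Kaj} (recurrence is equivalent to every non-negative $h \in D(\Ee)\cap L^\infty(m)$ with $\Ee(h,\varphi)\geq 0$ for all non-negative $\varphi \in D(\cE)$ being constant) and then shows directly that every such $h$ is constant. You instead use the recurrence/transience dichotomy for irreducible forms and take $h = e_B$, the $0$-order equilibrium potential of a relatively compact open set, which is a concrete instance of such an excessive function with $\Ee(e_B) = \mathrm{Cap}(B) > 0$, hence non-constant (a constant in $D(\Ee)$ has zero energy since there is no killing). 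Your route is a touch more self-contained on the potential-theoretic side (classical equilibrium-potential theory in place of the citation of \cite{Kaj}), but it buys nothing beyond that, and both versions still require the same bridge discussed next.

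The step you flag as ``the main obstacle'' but do not carry out is genuinely the crux, and it is precisely Lemma~\ref{lemma:compatibility} of the paper. Definition~\ref{d:harmonic} requires $f \in \dEl^*$ --- not merely $\dEl$, as you write; membership in $\dEl^*$ additionally demands local finiteness of $\int_{K\times X\setminus d}(f(x)-f(y))^2\,dJ(x,y)$ --- and requires the inequality $\int_X d\Gm(f,\varphi) \leq 0$ against the \emph{measure-valued} form $\Gm$, not against $\Ee$. Passing from $h \in D(\Ee)\cap L^\infty(m)$ with $\Ee(h,\varphi) \geq 0$ to $h \in \dEl^*$ with $\int_X d\Gm(h,\varphi) \geq 0$ is not automatic; the paper proves it using an approximating sequence, Fatou's lemma for the jump part, and the identity
\begin{equation*}
\cE(\varphi g) - \cE(\varphi g^2,\varphi) = \int_X \varphi^2\, d\Gm^{(c)}(g) + \int_{X\times X\setminus d}\varphi(x)\varphi(y)(g(x)-g(y))^2\,dJ(x,y).
\end{equation*}
Once you cite Lemma~\ref{lemma:compatibility} (together with the facts, recorded in Section~\ref{s:setup}, that $1 \in \dEl^*$ with $\Gm(1)=0$ in the absence of killing), your argument closes and is correct; without it, the assertion that $f = 1 - e_B$ is subharmonic in the sense of Definition~\ref{d:harmonic} remains unjustified.
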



\section{Fukushima's ergodic theorem}\label{s:FET}

Let $m$ be a $\sigma$-finite measure on (a $\sigma$-algebra on) $X$. Let $ \mathcal{E} $ be a Dirichlet form on $ L^{2}(m) $ with nonnegative generator $ L $ and associated Markovian semigroup $ (T_{t}) = (e^{-tL})$. For $1 \leq p \leq \infty$, let $ (T^p_t)$ be the Markovian extension of the semigroup to $L^{p}(m) $. For $1 \leq p < \infty$, these are $C_0$-semigroups of contractions and we denote by  $ L_{p} $  the  corresponding generator, \cite{Davies}. For $p = \infty$, it is also a semigroup of contractions, which is only weak-*-continuous. These extension are compatible in the sense that for $p \neq q$ they agree on $L^p(m) \cap L^q(m)$. Hence, on the semigroup level we often omit the superscript $p$. 

In this section a function $ f \in  L^p(m) $ is called {\em $ L_{p}$-harmonic} if it belongs to the domain of $ L_{p} $ and $ L_{p}f = 0 $ holds.  For $ p = 2 $, we also speak about $ L $-harmonic functions instead of $ L_{2}$-harmonic functions.  Clearly, a function $f$  is $L$-harmonic if and only if $\mathcal{E} (f) = 0$.

As usual we denote the dual pairing between $ L^{p}(m) $ and $ L^{q}(m) $ with $1/q + 1/p = 1$ by $ (\cdot,\cdot) $.

Recall that a Dirichlet form $\cE$ is called {\em conservative} if $T_t 1 = 1$.   In 1982 Fukushima \cite{Fuk82} proved that if $ \mathcal{E} $ is  conservative (and coming from an $m$-symmetric Markov transition function), then for $p \in (1,\infty)$ and all $ f\in L^{p}(m) $
\begin{align*}
	\lim_{t \to \infty}T_{t}f =   g \quad m\text{-a.e.},
\end{align*}
 where $ g \in L^p(m)$ is a $ (T_{t})$-invariant function. In particular, if $ \mathcal{E} $ is additionally irreducible, then $ g $ is constant and equal to $ m(X)^{-1}\int_{X}fdm $ whenever $ m(X)<\infty $ and equal to $ 0 $ if $ m(X)=\infty $. Now, one can argue that $L_p$-harmonic functions are invariant under the semigroup and therefore all harmonic functions must be constant in the above setting.

Indeed, the setting of \cite{Fuk82} starts from an $ m $-symmetric, conservative Markov transition function on a $ \sigma $-finite measure space. The proof then relies on Rota's ergodic theorem \cite{DM80}. Here, we give an analytic proof of a related result under weaker assumptions. We only  need a (symmetric) Dirichlet form but  only  show strong convergence in $L^p(m)$ for $p \in (1,\infty)$. This is however enough to  establish Theorem~\ref{t:Fuk} along the same lines as discussed above.

%

\begin{lemma} \label{eigenfunction-constant} If $\cE$ is irreducible and $0$ is an eigenvalue of $L$, then any  corresponding eigenfunction must be constant.
\end{lemma}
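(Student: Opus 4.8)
The plan is to convert the spectral hypothesis into invariance under the semigroup and then feed this into irreducibility. An eigenfunction $u$ for the eigenvalue $0$ satisfies $u\in D(L)$ and $Lu=0$, so by the functional calculus $T_tu=u$ for all $t\ge0$; equivalently $u$ is $(T_t)$-invariant and $\mathcal{E}(u)=(Lu,u)=0$. Since $(T_t)$ maps real functions to real functions, $L$ commutes with complex conjugation, so $\operatorname{Re}u$ and $\operatorname{Im}u$ are separately harmonic and it suffices to treat a real-valued $u$. I would next upgrade this to invariance of $|u|$: positivity preservation applied to $T_tu=u$ and $T_t(-u)=-u$ gives $T_t|u|\ge|u|\ge0$, whence $\|T_t|u|\|_2\ge\||u|\|_2=\|u\|_2$, while the $L^2$-contractivity of $T_t$ gives the reverse inequality; equality of norms together with the pointwise bound forces $T_t|u|=|u|$, so that $u^{\pm}=\tfrac12(|u|\pm u)$ are invariant as well.

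The core step is to turn invariance of the nonnegative function $u^{+}$ into an invariant set. Put $A=\{u^{+}>0\}$. For $0\le g\in L^2(m)$ supported in $A^c$ one computes, using symmetry and invariance, $\int_A (T_tg)\,u^{+}\,dm=\int (T_tg)\,u^{+}\,dm=\int g\,(T_tu^{+})\,dm=\int g\,u^{+}\,dm=0$; since $T_tg\ge0$ and $u^{+}>0$ on $A$, this forces $T_tg=0$ on $A$. Hence $(T_t)$ leaves the subspace of functions supported in $A^c$ invariant, and by self-adjointness also its orthogonal complement, so $A$ is an invariant set. Irreducibility then forces $m(A)=0$ or $m(A^c)=0$, i.e.\ $u$ has a constant sign $m$-a.e.; replacing $u$ by $-u$ if necessary we may assume $u\ge0$.

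It remains to promote a fixed sign to a fixed value. I would apply the same support-and-invariance mechanism to the super-level sets $\{u>c\}$ for $c>0$: once each of these is shown to be an invariant set, irreducibility makes it null or co-null, which pins $u$ down to a single value $m$-a.e. Since a nonzero constant lies in $L^2(m)$ only when $m(X)<\infty$, the existence of the eigenfunction then also forces the measure to be finite, consistent with Theorem~\ref{t:Fuk}. I expect the genuine obstacle to sit exactly in this last step: for $c=0$ the support computation above is self-contained, but for $c>0$ one must control $T_t(u\wedge c)$ or $T_t(u-c)^{+}$, and this is where the full Markovian structure of the Dirichlet form (stability under normal contractions, the Beurling--Deny formula) and some care about the role of constants enter. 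Alternatively one may simply invoke the standard characterization of irreducibility in \cite{FOT}, that every $(T_t)$-invariant function of an irreducible form is constant, whereupon the argument terminates already after the reduction in the first paragraph.
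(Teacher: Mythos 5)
Your first two paragraphs are sound: the reduction to a real-valued $u$ with $T_tu=u$ and $\mathcal{E}(u)=0$, the norm argument forcing $T_t|u|=|u|$, and the support computation showing that $A=\{u^{+}>0\}$ is an invariant set are all correct, and they do yield that $u$ has a fixed sign. But the proof then stops exactly where you say you expect the obstacle to be, and that gap is genuine: fixed sign is far from constancy, and the support mechanism of your second paragraph does not transfer to $\{u>c\}$ for $c>0$ as written, because it requires a \emph{nonnegative $(T_t)$-invariant} function vanishing precisely off the set in question — here $(u-c)^{+}$ — and the invariance of $(u-c)^{+}$ is not a consequence of positivity preservation or contractivity alone (for a general positivity-preserving contraction semigroup with invariant vector $u$, the truncations $(u-c)^{+}$ need not be invariant). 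The missing ingredient is precisely the Markovian/Dirichlet structure: since $\mathcal{E}(u)=\lVert L^{1/2}u\rVert_2^2=0$ and $0\leq\mathcal{E}(C u)\leq\mathcal{E}(u)$ for every normal contraction $C$, one gets $\mathcal{E}((u-c)^{+})=0$, hence $(u-c)^{+}\in\ker L^{1/2}=\ker L$ and $T_t(u-c)^{+}=(u-c)^{+}$; feeding this into your own support argument makes $\{u>c\}$ invariant for every $c>0$ and finishes the proof. Your closing ``alternative'' — to invoke the statement that every $(T_t)$-invariant function of an irreducible form is constant — is not an escape route here: that statement \emph{is} the content of the lemma (an eigenfunction to $0$ is the same thing as a $(T_t)$-invariant $L^2$-function), so citing it is circular in this context, and it is not how \cite{FOT} defines irreducibility (which is via invariant sets).

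For comparison, the paper's proof runs entirely at the form level: from $\mathcal{E}(\varphi)=0$ it deduces $\mathcal{E}(C\varphi)=0$ for every normal contraction $C$, so the $0$-eigenspace is stable under normal contractions; it then invokes the Perron--Frobenius-type fact that for an irreducible form this eigenspace is one-dimensional and spanned by a function of fixed sign, and observes that a one-dimensional space invariant under all normal contractions can only be spanned by a constant. Your route is more hands-on (it derives the fixed-sign statement rather than quoting it, via invariant sets and the semigroup), but both approaches hinge on the same single input — stability of the null space of $\mathcal{E}$ under normal contractions — which is exactly the step your write-up identifies but does not carry out.
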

\begin{proof} Let  $\varphi$ be  an eigenfunction to $0$.  As $\mathcal{E}$ is a Dirichlet form, we infer, for any normal contraction $C$,
	$$0 \leq \mathcal{E} (C \varphi) \leq \mathcal{E}(\varphi) =0$$
and, hence,
$$\mathcal{E} (C\varphi) =0.$$
  Thus, $C \varphi$ is an eigenfunction to $0$ for any normal contraction $C$. Now, as  $\cE$ is irreducible, we know that the eigenspace to $0$ is spanned by a unique function of fixed sign. The preceding  reasoning then shows that the span of this function  must be invariant under normal contractions. This is only possible if the function is constant.
\end{proof}

\begin{definition}[The ground state $\Phi$]
	If $\cE$ is irreducible, we define $\Phi$ to be zero if $0$ is not an eigenvalue of $L$ and to be  the unique  positive constant eigenfunction to $0$ with $\av{\Phi}_2 = 1$ otherwise.
\end{definition}

\begin{remark}
\begin{enumerate}[(a)]
 \item Let us emphasize  that $\Phi$   is a constant function in $L^2(m)$ (in all situations). Hence, $\Phi \neq 0$  can only occur if $m(X) < \infty$. In this case, constant functions are eigenfunctions to $0$ if and only if $\cE$ is conservative. Indeed, if $m(X) < \infty$ and $\cE$ is conservative, we have by definition $T_t1 = 1$ for all $t > 0$. This shows
 $$  \lim_{h \to 0+} h^{-1}(T_h 1 - 1) = 0$$
 in $L^2(m)$, so that $1 \in D(L)$ and $L1 = 0$.  Conversely, if $1 \in D(L)$ with $L1 = 0$, we have $1 \in D(\cE)$ and $\cE(1) = 0$. This implies conservativeness, see \cite[Theorem~1.6.6]{FOT}.

 In summary we obtain 
 $$\Phi = \begin{cases}
           \frac{1}{\sqrt{m(X)}} &\text{if } m(X) < \infty \text{ and } \cE \text{ is conservative},\\
           0 &\text{else}.
          \end{cases}
$$

 \item Since $\Phi \neq 0$ implies $m(X) <\infty$, the constant function $\Phi$ belongs to  $L^p(m)$ for any $p \in [1,\infty]$. Thus, in both cases $\Phi =0$ and $\Phi \neq 0$,  the map
	$$L^p(m) \to L^p(m),\quad f\mapsto (\Phi , f) \Phi$$
	is well defined and continuous. By our discussion above it is given by
	$$(\Phi , f) \Phi = \begin{cases}
           \frac{1}{m(X)} \int_Xf dm  &\text{if } m(X) < \infty \text{ and } \cE \text{ is conservative},\\
           0 &\text{else}.
          \end{cases}$$
\end{enumerate} 
\end{remark}

\begin{lemma}\label{eigenfunction-multiple} If $\cE$ is irreducible, then
	any $L$-harmonic function  is a multiple of $\Phi$.
\end{lemma}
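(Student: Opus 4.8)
The plan is to reduce the statement immediately to Lemma~\ref{eigenfunction-constant} together with the definition of the ground state $\Phi$. The key observation is purely definitional: an $L$-harmonic function $f$ lies in $D(L)$ with $Lf = 0$, so $f$ is either zero or an eigenfunction of $L$ to the eigenvalue $0$. This is exactly the object controlled by the preceding lemma, so almost all of the work has already been done.

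First I would dispose of the trivial case $f = 0$, in which $f = 0 \cdot \Phi$ is a multiple of $\Phi$ regardless of whether $\Phi$ vanishes. So assume $f \neq 0$. Then $Lf = 0$ with $f \neq 0$ exhibits $0$ as an eigenvalue of $L$; consequently, by the definition of the ground state, $\Phi$ is the unique positive constant eigenfunction to $0$ normalized by $\av{\Phi}_2 = 1$, and in particular $\Phi$ is a nonzero constant function (forcing $m(X) < \infty$). By Lemma~\ref{eigenfunction-constant}, the eigenfunction $f$ is itself constant. Since any two constant functions, one of which is nonzero, are scalar multiples of one another, I conclude $f = c\Phi$, where $c$ is the ratio of the two constant values. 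This completes the argument.

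There is essentially no hard part to this lemma: the genuine content — that eigenfunctions to $0$ are constant under irreducibility — has been isolated in Lemma~\ref{eigenfunction-constant}, and the normalization is built into the definition of $\Phi$. The only point requiring care is the bookkeeping between the two regimes $\Phi = 0$ and $\Phi \neq 0$, which must be matched against whether $0$ is an eigenvalue of $L$, equivalently whether a nontrivial $L$-harmonic function exists at all; handling $f = 0$ separately at the outset keeps this matching clean.
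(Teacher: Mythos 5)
Your argument is correct and is precisely the paper's own reasoning, merely spelled out: the paper's proof is the one-line remark that an $L$-harmonic function either vanishes or is an eigenfunction to $0$, after which Lemma~\ref{eigenfunction-constant} and the definition of $\Phi$ finish the job exactly as you describe. Nothing to add.
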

\begin{proof}  This is clear (as any $L$-harmonic function either vanishes or is an eigenfunction to
	$0$).
\end{proof}

\begin{lemma} \label{convergence-q} If $\cE$ is irreducible, then for $p \in (1,\infty)$ and $f\in L^p (m)$
$$\lim_{t \to \infty}T_t f = ( \Phi, f )\Phi\qquad \text{ in } L^p(m).$$  
\end{lemma}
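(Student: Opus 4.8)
The plan is to settle the case $p=2$ first by spectral calculus, and then to transfer the convergence to general $p$ by interpolation of $L^p$-norms combined with the contraction property of the semigroup.

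For $p=2$ the generator $L\geq 0$ is self-adjoint with $T_t=e^{-tL}$. Writing $T_t=\int_{[0,\infty)}e^{-t\lambda}\,dE_\lambda$ for the spectral resolution $E$ of $L$, the integrand $e^{-t\lambda}$ converges pointwise and boundedly to the indicator $\mathbf{1}_{\{0\}}(\lambda)$ as $t\to\infty$. Dominated convergence for the finite spectral measure $\lambda\mapsto \langle E_\lambda f, f\rangle$ then yields $T_t f\to E(\{0\})f$ in $L^2(m)$, where $E(\{0\})$ is the orthogonal projection onto $\ker L$, the eigenspace to $0$. By Lemma~\ref{eigenfunction-multiple} this eigenspace equals $\mathrm{span}\{\Phi\}$ (and is trivial when $\Phi=0$), and since $\|\Phi\|_2=1$ whenever $\Phi\neq 0$ the projection is precisely $f\mapsto(\Phi,f)\Phi$. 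This proves the claim for $p=2$.

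For general $p\in(1,\infty)$ I would argue by density. Write $P$ for the map $f\mapsto(\Phi,f)\Phi$, which by the preceding remark is bounded on every $L^p(m)$, and recall that each $T_t$ is a contraction on $L^1(m)$ and on $L^\infty(m)$. Using that $\Phi$ is a constant function and that $P=0$ when $m(X)=\infty$, a direct computation gives $\|Pf\|_1\leq\|f\|_1$ and $\|Pf\|_\infty\leq\|f\|_\infty$, so that $T_t-P$ is bounded by $2$ on $L^1(m)$ and on $L^\infty(m)$ uniformly in $t$. Now fix $f$ in the dense subspace $L^1(m)\cap L^\infty(m)$; then $f\in L^2(m)$, so the case $p=2$ gives $\|(T_t-P)f\|_2\to 0$. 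The log-convexity inequality $\|g\|_p\leq\|g\|_{p_0}^{\theta}\|g\|_{p_1}^{1-\theta}$ with $1/p=\theta/p_0+(1-\theta)/p_1$ then finishes the argument on this class: for $p>2$ interpolate between $p_0=2$ and $p_1=\infty$, bounding the $L^\infty$-factor by the uniform constant and sending the $L^2$-factor to $0$; for $1<p<2$ interpolate between $p_0=1$ and $p_1=2$ in the same way. Hence $\|(T_t-P)f\|_p\to 0$ for all $f\in L^1(m)\cap L^\infty(m)$.

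Finally, since $T_t-P$ is bounded on $L^p(m)$ uniformly in $t$ (being the difference of the $L^p$-contraction $T_t$ and the bounded operator $P$), an $\varepsilon/3$-argument extends the convergence from this dense set to all $f\in L^p(m)$. The step requiring the most care is the $p=2$ case: the possibility that $0$ is not isolated in the spectrum of $L$ is handled exactly by the dominated convergence argument inside the spectral integral, and the identification of the limiting projection with $f\mapsto(\Phi,f)\Phi$ rests on the irreducibility input from the earlier lemmas. The remaining interpolation and density steps are routine.
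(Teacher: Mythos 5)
Your argument is correct and follows essentially the same route as the paper: establish the $L^2(m)$-convergence to the kernel projection via the spectral theorem (the paper cites a reference for this step where you spell out the dominated-convergence argument), then transfer to $L^p(m)$ on the dense set $L^1(m)\cap L^2(m)\cap L^\infty(m)$ by log-convexity of the $L^p$-norms together with the $L^1$/$L^\infty$-contractivity of $T_t$ and boundedness of $f\mapsto(\Phi,f)\Phi$, and conclude by uniform boundedness and density. No gaps.
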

\begin{proof}  On the $L^2(m)$-level, the spectral theorem implies that  $T_t f$ converges to the projection of $f$ to the kernel of $L$, as $t \to \infty$. In our situation  this reads  
$$T_t f \stackrel{L^2}{\to} \langle \Phi, f\rangle \Phi,$$
 see e.g. \cite[Theorem~1.1]{KLVW}. For $f\in L^2(m)\cap L^\infty(m)\cap L^{1}(m) \subseteq L^p(m)$, we can estimate by Littlewood's   inequality  for $L^p$-spaces (i.e., Hölder inequality with a smart choice of parameters)
\begin{align*}
		\av{T_t f - (\Phi, f) \Phi}_p 
		&\leq   \av{T_t f - (\Phi, f) \Phi}^{1-\theta}_r \av{T_t f - (\Phi, f) \Phi}_2^\theta\\
		&\leq  C_r \av{f}_r^{1-\theta} \av{T_t f -  \langle \Phi, f\rangle \Phi}_2^{\theta}, 
\end{align*}
	with  $r = \infty$ and $\theta = 2/p$ if $p \geq 2$, and $r = 1$ and $\theta = 2(p-1)/p$, if $p \leq 2$. The second inequality follows from the fact that $(T_t)$ is a contraction on $L^1(m)$ and $L^\infty(m)$ and that the semigroups agree on $L^p(m) \cap L^2(m).$ 
	
This estimate and our discussion on the $L^2$-case show that the desired convergence holds on a dense subspace of $L^p(m)$. Since the semigroups are uniformly bounded, it extends to all of $L^p(m)$. 	 
	%
\end{proof}

 \begin{remark}
 Our discussion after the definition of $\Phi$ shows that this a version of Fukushima's ergodic theorem for semigroups associated with not necessarily conservative Dirichlet forms but with the weaker statement on $L^p$-convergence instead of $m$-a.e. convergence. 
 \end{remark}

\begin{lemma} \label{eigenfunction-semigroup}For $p \in [1,\infty)$, let $f \in L^p(m)$ be an $L_p$-harmonic function.  Then, $T_p f = f$ for any $t\geq 0$.
\end{lemma}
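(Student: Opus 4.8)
The plan is to invoke the elementary relation between a $C_0$-semigroup and its generator. Recall from the setup that for $p\in[1,\infty)$ the extension $(T^p_t)$ is a $C_0$-semigroup of contractions on $L^p(m)$ with generator $L_p$, normalized so that $T^p_t = e^{-tL_p}$. The fact I would use is the standard one from semigroup theory: for any $f$ in the domain $D(L_p)$ of the generator, the orbit map $t\mapsto T^p_t f$ is continuously differentiable in $L^p(m)$, one has $T^p_t f \in D(L_p)$ for all $t\geq 0$, and the generator commutes with the semigroup on its domain, so that
$$\frac{d}{dt}T^p_t f = -L_p T^p_t f = -T^p_t L_p f.$$

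Given this, the conclusion is immediate. If $f$ is $L_p$-harmonic, that is $f\in D(L_p)$ and $L_p f = 0$, then the displayed derivative equals $-T^p_t\,0 = 0$ for every $t\geq 0$. Hence $t\mapsto T^p_t f$ is constant in $L^p(m)$, and evaluating at $t=0$ gives $T^p_t f = T^p_0 f = f$ for all $t\geq 0$, which is the assertion. Equivalently, one may argue in integrated form: for $f\in D(L_p)$ the Bochner integral identity
$$T^p_t f - f = -\int_0^t T^p_s L_p f\, ds$$
holds in $L^p(m)$, and the right-hand side vanishes because $L_p f = 0$.

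I do not expect a substantive obstacle here, as the statement is essentially the observation that kernel elements of the generator are fixed points of the semigroup. The only points worth keeping in mind are that the entire argument stays within the single space $L^p(m)$, so no passage between different $L^p$-scales and no almost-everywhere subtleties enter, and that the differentiability of the orbit map is available uniformly for all $p\in[1,\infty)$ precisely because $(T^p_t)$ is a genuine strongly continuous semigroup on $L^p(m)$ in this range, including the boundary case $p=1$. The mild conceptual remark is that $L_p$-harmonicity is the \emph{strong} requirement $f\in D(L_p)$ with $L_p f = 0$, rather than a merely weak vanishing, and it is exactly this that licenses the differentiation step yielding the invariance $T^p_t f = f$.
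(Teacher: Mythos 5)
Your proposal is correct and rests on the same standard fact as the paper's proof, namely that for $f$ in the domain of the generator the orbit $t\mapsto T^p_t f$ is $C^1$ and satisfies the abstract Cauchy problem; the paper phrases the conclusion via uniqueness of solutions (the constant function solves the problem with initial value $f$), while you differentiate the orbit directly and observe the derivative vanishes, which is the same argument in a slightly different wrapper. No gaps.
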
\begin{proof} By abstract  theory for $C_0$-semigroups, for a given $g \in D(L_p)$,  the function $[0,\infty) \to L^p(m)$, $t \mapsto T_t g$ is the unique  solution in $C^1([0,\infty);L^p(m))$ of the Cauchy problem
	$$
	\begin{cases}
	  \dot u_t = L_p u_t \quad\text{ for } t >0\\
	u_0 = g
	\end{cases}.
	$$
   Now, obviously, $[0,\infty) \to L^p(m),\, t \mapsto  f$ is continuously differentiable and solves the problem with initial value $f$ so that $T_t f = f$ for $t > 0$.
\end{proof}

Theorem~\ref{t:Fuk} is a direct consequence of the following theorem.

\begin{thm} \label{main}Let $ \mathcal{E} $ be an irreducible Dirichlet form and let $f$ be an $L_p$-harmonic function for some $p \in (1,\infty)$. Then, $f$ is constant, and in fact
	$$ f = (\Phi, f)\Phi = \begin{cases}
           \frac{1}{m(X)} \int_Xf dm  &\text{if } m(X) < \infty \text{ and } \cE \text{ is conservative}\\
           0 &\text{else}
          \end{cases}.$$
\end{thm}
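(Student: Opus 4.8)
The plan is to combine the invariance of harmonic functions under the semigroup with the ergodic convergence established above. First I would observe that, since $f$ is $L_p$-harmonic, Lemma~\ref{eigenfunction-semigroup} gives $T_t f = f$ for all $t \geq 0$; in other words the net $t \mapsto T_t f$ is constant, so its limit as $t \to \infty$ exists trivially and equals $f$.

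Next I would invoke Lemma~\ref{convergence-q}, which applies precisely because $p \in (1,\infty)$ and $f \in L^p(m)$. It asserts
$$\lim_{t\to\infty} T_t f = (\Phi, f)\Phi \quad\text{in } L^p(m).$$
Comparing the two descriptions of the same limit forces $f = (\Phi, f)\Phi$. Here the pairing $(\Phi, f)$ is well defined because, by the definition of $\Phi$ and the following Remark, $\Phi$ is either $0$ or a constant supported on a set of finite measure and hence lies in $L^q(m)$ for the conjugate exponent $q$. In particular $f$ is a scalar multiple of the constant function $\Phi$ and is therefore constant, as claimed.

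Finally, to obtain the explicit piecewise formula I would simply quote the computation of $(\Phi, f)\Phi$ carried out in the Remark after the definition of $\Phi$: it equals $\tfrac{1}{m(X)}\int_X f\, dm$ when $m(X) < \infty$ and $\cE$ is conservative, and $0$ otherwise. Substituting this into $f = (\Phi, f)\Phi$ yields the stated dichotomy.

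As for the main obstacle: once Lemmas~\ref{eigenfunction-semigroup} and~\ref{convergence-q} are in hand, the argument is essentially immediate, since the substantive analytic work — the spectral-theoretic $L^2$-convergence together with the Littlewood interpolation estimate used to pass to $L^p$ — has already been absorbed into Lemma~\ref{convergence-q}. The only point genuinely requiring care is that the hypothesis $p \in (1,\infty)$ cannot be relaxed to include the endpoint $p = 1$, precisely because the convergence furnished by Lemma~\ref{convergence-q} is unavailable there; this is what confines Theorem~\ref{main}, and hence Theorem~\ref{t:Fuk}, to the range $p \in (1,\infty)$.
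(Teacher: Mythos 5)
Your proposal is correct and follows exactly the paper's own argument: combine Lemma~\ref{eigenfunction-semigroup} ($f = T_t f$) with Lemma~\ref{convergence-q} ($T_t f \to (\Phi,f)\Phi$ in $L^p(m)$) and then substitute the explicit computation of $(\Phi,f)\Phi$ from the remark following the definition of $\Phi$. Nothing is missing.
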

\begin{proof}  For $p \in (1,\infty)$, we infer from the previous two lemmas
	$$ f = T_t f \to (\Phi , f ) \Phi, \text{ as } t \to \infty. $$
	Combined with our computation of  $(\Phi , f ) \Phi$ this yields the result.
\end{proof}


\section{Regular Dirichlet forms and harmonic functions}\label{s:setup}


\subsection{Basic notions and intrinsic metrics}

We use the notation of the previous section. Moreover, from now on we additionally assume that $X$ is a locally compact separable metric space,  $m$ is a Radon measure of full support and $\mathcal{E}$ is a regular Dirichlet form on $L^2(m)$, see \cite{FOT}. 

By $C(X)$ we denote the space of continuous functions on $X$ and by $C_c(X)$ the space of continuous functions of compact support.




As for continuous functions, we write $D(\cE)_c$ for the functions in $D(\cE)$ with compact support. We denote by $D(\cE)_{\rm loc}$ the space of functions locally in $D(\cE)$, that is the set of functions $f$ such that for every open precompact set $G$ there is a function in $D(\cE)$ which coincides with $f$ on $G$.


By the Beurling-Deny formula \cite[Theorems~3.2.1 and 5.2.1]{FOT} we have for $f,g\in D(\cE)$
$$\cE(f)=\int_X d\Gm^{(c)}(f)+\int_{X\times X \setminus d} (\tilde{f}(x)-\tilde{f}(y))^2 dJ(x,y)+\int_{X}\tilde{f}(x)^2 k(dx),$$
where $\Gm^{(c)}$  is a   measure valued  strongly local quadratic form on $D(\cE)$ (i.e., $\Gm^{(c)}(f,g)=0$ if $f$ is constant on a neighborhood of the support of $g$), $J$ is a Radon measure on $X\times X \setminus d=\{(x,y)\in X\times X\mid x\neq y\}$ and $k$ is a Radon measure on $X$. Moreover, $\tilde f$ is a  quasi-continuous representative of $f$. Such a representative exists for $f\in D(\cE)$ by \cite[Theorem~2.1.7]{FOT} and the argument given there extends directly to $ f\in D(\cE)_{\rm loc} $, see e.g.  \cite[Proposition 3.1]{FLW}. To simplify notation, below we will   always choose a quasi-continuous representative and just write $f$ instead of $\tilde f$. Since we exclusively deal with continuous functions or functions from  $D(\cE)_{\rm loc}$, this is always possible. \medskip

{\bf Assumption:} From now on we assume that $\cE$ has no killing, i.e., $k = 0$.  
\medskip

Using the measure $J$ from the decomposition we obtain a finite Radon measure valued quadratic form $\Gm^{(j)}$ on $D(\cE)$, which is uniquely determined by
\begin{align*}
	\int_{K}d\Gm^{(j)}(f) =\int_{K \times X \setminus d} (f(x) - f(y))^2 dJ(x,y)
\end{align*}
%
%
for all compact $K\subseteq X$ and $f\in D(\cE)$. 

Next, we discuss how to extend $\Gm^{(c)}$ and $\Gm^{(j)}$ to larger classes of functions. 
%
%
%
%
%
 We denote by $ D(\cE)_{\rm loc}^{*}$ the space of functions in $f\in D(\cE)_{\rm loc}$ such that for every compact $K\subseteq X$
 \begin{align*}
   \int_{K\times X \setminus d}(f(x)-f(y))^{2}dJ(x,y)<\infty.
\end{align*}
This space was introduced in \cite{FLW}. For later purposes, we note that any $f \in D(\cE)_{\rm loc}^{*}$ with compact support belongs to $D(\cE)_{\rm c}$, see   \cite[Theorem~3.5]{FLW}.

The bilinear forms on $D(\cE) \times D(\cE)$ induced from $\Gm^{(c)}$ and $\Gm^{(j)}$ by polarization can be (partially) extended to bilinear forms $D(\cE)_{\rm loc}^* \times D(\cE)_c$ with values in the space of finite signed Radon measures.  More precisely, for $f \in D(\cE)_{\rm loc}^*$ and $\varphi \in D(\cE)_c$ these extensions are characterized by
$$\int_{K}d\Gm^{(c)}(f,\varphi) = \int_{K}d\Gm^{(c)}(f_K,\varphi), $$
and
$$\int_{K}d\Gm^{(j)}(f,\varphi) =\int_{K \times X \setminus d} (f(x) - f(y))(\varphi(x) - \varphi(y)) dJ(x,y),$$
%
for all compact $K \subseteq X$. Here, $f_K \in D(\cE)$ is a function with $f = f_K$ on a relatively compact open neighborhood of $K$. Due to the strong locality $\Gm^{(c)}(f,\varphi)$ is well definied (i.e., independent of the choice of $f_K$, see the remarks after the proof of \cite[Theorem 3.2.2]{FOT}) and the definition of  $D(\cE)_{\rm loc}^*$ ensures that $\Gm^{(j)}(f,\varphi)$ is well defined (i.e., it yields a finite signed Radon measure, see \cite[Theorem~3.4]{FLW}). Similarly, the quadratic forms $\Gm^{(c)}$ and $\Gm^{(j)}$ can be extended to quadratic forms on $D(\cE)_{\rm loc}^*$ taking values in the set of nonnegative (not necessarily finite) Radon measures, see \cite[Proposition~3.3]{FLW}. We let $\Gamma = \Gm^{(c)} + \Gm^{(j)}$ denote both the quadratic form on $D(\cE)_{\rm loc}^*$ and the induced bilinear form on $D(\cE)_{\rm loc}^* \times D(\cE)_c$. 

Since $\cE$ is a Dirichlet form, for any normal contraction $C \colon \R \to \R$ and $f \in \dEl^*$, we have $C \circ f \in \dEl^*$ and 
$$\int_X d\Gamma(C \circ f) \leq \int_X d\Gamma(f).$$
Since the killing $k$ vanishes, this inequality extends to $1$-Lipschitz functions.  Indeed, if $C \colon \R \to \R$ is  $1$-Lipschitz, then $C \circ f = \tilde C (f) + C(0)$ with the normal contraction $\tilde C = C - C(0)$. Since the constant function $1$ belongs to $\dEl^*$, see \cite[Proposition~3.1]{FLW} and satisfies $\Gamma(1) = 0$, this yields the claim.

The strongly local part satisfies a Leibniz rule. If $f,g \in \dEl^*$ such that $fg \in \dEl^*$, then 
$$d\Gamma^{(c)}(fg,\varphi) = f d\Gamma^{(c)}(g,\varphi) + g d\Gamma^{(c)}(f,\varphi)$$
for all $\varphi \in D(\cE)_c$, see the proof of \cite[Theorem~3.9]{FLW}. Moreover, it satisfies a chain rule. If $C \in C^1(\R)$ with bounded derivative, then for $f \in \dEl^*$, we have $C \circ f \in \dEl^*$ and 
$$\Gm^{(c)}(C(f),\varphi) = C'(f) \Gm^{(c)}\Gamma(f,\varphi)$$
for all $\varphi \in D(\cE)_c$, see \cite[Theorem~3.2.2]{FOT}.


Let $\rho \colon X\times X\to[0,\infty)$ be a pseudo metric, that is a symmetric map with zero diagonal satisfying the triangle inequality. For measurable sets $A\subseteq X$, we write $\rho_{A}=\inf_{x\in A}\rho(x,\cdot)$. The following definition is a slight modification of the one given in  \cite{FLW}.

\begin{definition}[Intrinsic metric]\label{d:intrinsic} A pseudo metric $\rho$ on $X$ is called   {\em intrinsic} (for the regular Dirichlet form $\cE$) if it is continuous with respect to the topology on $X$ and there are Radon measures $m^{(c)}$, $m^{(j)}$ with $m^{(c)}+m^{(j)}\leq m$  such that for every measurable $A \subseteq X$ the following holds: $\rho_{A}\in  D(\cE)_{\rm loc}^{*}$  and   
	\begin{align*}
		\Gm^{(c)}(\rho_{A})\leq m^{(c)}\quad\mbox{and}\quad \Gm^{(j)}(\rho_{A})\leq m^{(j)}.
	\end{align*}
\end{definition}

\begin{remark}
 In \cite{FLW}, $\rho$ is allowed to take the value infinity. In this sense our definition is a bit more restrictive. 
\end{remark}

\begin{definition}\label{d:C} The \emph{jump size} $s$ of a continuous pseudo metric $\rho$ with respect to the Dirichlet form $\cE$ is given by
	\begin{align*}
		s:=\inf\{t\ge0\mid J(\{(x,y)\in X\times X \setminus d\mid \rho(x,y)>t\})=0\} \in [0,\infty].
	\end{align*}
\end{definition}

If $\rho$ is a fixed pseudo-metric, for  $U\subseteq X$  and $r \in\mathbb{R}$ we write  
$$B_{r}(U)=\{x\in X\mid\rho(x,y)\leq r \mbox{ for some }y\in U\}.$$

The relevance of intrinsic metrics comes from the fact that Lipschitz functions with respect to intrinsic metrics induce good cut-off functions. This is discussed next. 

Fix $o \in X$ and let $0\leq r<R$. Below we will make use of  
\begin{align*}
\eta = \eta_{r,R} \colon X \to \R,\quad \eta(x)=1\wedge \left(\frac{R-\rho(x,o)}{R-r}\right)_{+}.
\end{align*}
It is $(R-r)^{-1}$-Lipschitz and satisfies $\eta = 1$ on $B_r$ and $\eta =  0$ on $X\setminus B_{R}$.  
\begin{lemma}\label{lemma:lipschitz-cutoff}
 Let $\rho$ be an intrinsic metric for $\cE$ with jump size $s$. Then $\eta \in D(\cE)_{\rm loc}^{*}$ and it satisfies
  $$\Gamma^{(c)}(\eta) \leq \frac{1}{(R-r)^2} 1_{B_R \setminus B_r} m^{(c)} \text{ and } \Gamma^{(j)}(\eta) \leq \frac{1}{(R-r)^2}1_{B_{R + s} \setminus B_{r-s}} m^{(j)}.$$
  If, moreover, balls with respect to $\rho$ are precompact, then $\eta \in D(\cE)_c$. 
\end{lemma}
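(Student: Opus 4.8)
The goal is to prove that the cut-off function $\eta = \eta_{r,R}$ lies in $D(\cE)_{\rm loc}^*$, satisfies the stated bounds on $\Gamma^{(c)}(\eta)$ and $\Gamma^{(j)}(\eta)$, and lies in $D(\cE)_c$ when balls are precompact.

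\textbf{Plan of the proof.}

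First I would observe that $\eta$ is built from the distance function $\rho_{\{o\}} = \rho(\cdot, o)$ by applying the $1$-Lipschitz map $C \colon \R \to \R$, $C(t) = 1 \wedge ((R-t)/(R-r))_+$. Since $\rho$ is intrinsic, $\rho_{\{o\}} \in D(\cE)_{\rm loc}^*$ by Definition~\ref{d:intrinsic}, and the excerpt records that $D(\cE)_{\rm loc}^*$ is stable under composition with $1$-Lipschitz functions (using $k=0$). Hence $\eta = C \circ \rho_{\{o\}} \in D(\cE)_{\rm loc}^*$, which establishes the membership claim and lets me work with the energy measures $\Gamma^{(c)}(\eta)$ and $\Gamma^{(j)}(\eta)$.

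\textbf{The strongly local bound.} For $\Gamma^{(c)}(\eta)$ I would use the chain rule, recalling that $C$ has derivative bounded by $(R-r)^{-1}$ in absolute value. Approximating the piecewise-linear $C$ by smooth functions (or invoking the chain rule in the form valid for Lipschitz $C$), I get $\Gamma^{(c)}(\eta) \leq (R-r)^{-2} \Gamma^{(c)}(\rho_{\{o\}}) \leq (R-r)^{-2} m^{(c)}$ by the intrinsic bound. The support refinement to $1_{B_R \setminus B_r}$ comes from strong locality: where $\rho(\cdot,o) < r$ we have $\eta \equiv 1$, and where $\rho(\cdot,o) > R$ we have $\eta \equiv 0$, so $\Gamma^{(c)}(\eta)$ vanishes on the interiors of $B_r$ and $X \setminus B_R$. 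This gives the factor $1_{B_R \setminus B_r}$.

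\textbf{The jump bound.} This is the step I expect to be the main obstacle, because the support widening by the jump size $s$ must be tracked carefully. For a compact $K$ I would write $\int_K d\Gamma^{(j)}(\eta) = \int_{K \times X \setminus d}(\eta(x)-\eta(y))^2\, dJ(x,y)$ and use $|\eta(x)-\eta(y)| \leq (R-r)^{-1}\rho(x,y)$, which bounds the integrand by $(R-r)^{-2}(\rho(x,y))^2$; comparing with $\Gamma^{(j)}(\rho_{\{o\}}) \leq m^{(j)}$ yields the constant $(R-r)^{-2}$. For the support factor I would argue that the pair $(x,y)$ contributes only if $\eta(x) \neq \eta(y)$, which forces at least one of $x,y$ to lie in $B_R \setminus B_r$ after accounting for jumps: by definition of $s$, $J$-almost every contributing pair satisfies $\rho(x,y) \leq s$, so if neither point is in $B_{R+s} \setminus B_{r-s}$ then both lie in $B_{r-s}$ (where $\eta = 1$) or both in $X \setminus B_{R+s}$ (where $\eta = 0$), making the difference vanish. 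This localizes the measure to $B_{R+s} \setminus B_{r-s}$.

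\textbf{Compact support.} Finally, when balls are precompact, $\eta$ is supported in the precompact set $B_R$, so $\eta \in D(\cE)_{\rm loc}^*$ has compact support; by the cited fact that any $f \in D(\cE)_{\rm loc}^*$ with compact support lies in $D(\cE)_c$ (from \cite[Theorem~3.5]{FLW}), I conclude $\eta \in D(\cE)_c$.
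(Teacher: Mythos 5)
The paper does not actually prove the first two assertions itself: its proof consists of the citation \cite[Theorem~4.9 and Proposition~8.5]{FLW} for the membership $\eta \in D(\cE)_{\rm loc}^*$ and the two energy-measure bounds, plus the same observation you make for the final claim (compactly supported elements of $D(\cE)_{\rm loc}^*$ lie in $D(\cE)_c$). Your proposal instead derives the cited facts directly, and its architecture --- $\eta = C\circ \rho_{\{o\}}$ with $C$ Lipschitz, chain rule plus strong locality for $\Gamma^{(c)}$, and the jump-size argument for localizing $\Gamma^{(j)}(\eta)$ to $B_{R+s}\setminus B_{r-s}$ --- is exactly how these results are obtained in \cite{FLW}. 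Two small remarks: $C(t)=1\wedge((R-t)/(R-r))_+$ is $(R-r)^{-1}$-Lipschitz rather than $1$-Lipschitz (harmless, since $D(\cE)_{\rm loc}^*$ is a vector space, so one can rescale before composing with a normal contraction), and for $s<\infty$ the statement ``$J$-a.e.\ pair satisfies $\rho(x,y)\le s$'' needs the one-line continuity-from-below argument $J(\{\rho>s\})=\lim_{t\downarrow s}J(\{\rho>t\})=0$.

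There is, however, one step in the jump estimate that fails as literally written. You bound $|\eta(x)-\eta(y)|\le (R-r)^{-1}\rho(x,y)$ and then propose to compare $(R-r)^{-2}\rho(x,y)^2$ with $\Gamma^{(j)}(\rho_{\{o\}})\le m^{(j)}$. This comparison does not close: Definition~\ref{d:intrinsic} controls $\int_{K\times X\setminus d}(\rho_A(x)-\rho_A(y))^2\,dJ$, not $\int_{K\times X\setminus d}\rho(x,y)^2\,dJ$, and the pointwise inequality $(\rho_{\{o\}}(x)-\rho_{\{o\}}(y))^2\le \rho(x,y)^2$ points in the wrong direction; there is in general no reason for $\int_{K\times X\setminus d}\rho(x,y)^2\,dJ$ to be finite. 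The correct (and equally short) route is to use the Lipschitz property of $C$ with respect to the value of $\rho_{\{o\}}$ itself, namely
\begin{align*}
|\eta(x)-\eta(y)| \;\le\; \frac{1}{R-r}\,\bigl|\rho_{\{o\}}(x)-\rho_{\{o\}}(y)\bigr|,
\end{align*}
so that integrating over $K\times X\setminus d$ gives $\int_K d\Gamma^{(j)}(\eta)\le (R-r)^{-2}\int_K d\Gamma^{(j)}(\rho_{\{o\}})\le (R-r)^{-2}m^{(j)}(K)$ directly from the intrinsic-metric hypothesis. With that substitution, your support localization argument (both endpoints of a $J$-generic pair move by at most $s$, so pairs with $x\in B_{r-s}$ or $x\notin B_{R+s}$ contribute nothing) completes the jump bound, and the rest of the proposal is sound.
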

\begin{proof}
  The first statement and the estimates are the content of \cite[Theorem~4.9 and Proposition~8.5]{FLW}. The last statement follows from the fact that functions in $D(\cE)_{\rm loc}^{*}$ with compact support belong to $D(\cE)_c$.
\end{proof}
If $\rho$-balls are precompact, the previous lemma and the definitions of $\Gm^{(c)}$ and $\Gm^{(j)}$ yield for  $ f \in \dEl^* $
$$\int_{X}d\Gm^{(c)}(f,\eta)=\int_{B_{R}\setminus B_{r}}d\Gm^{(c)}(f,\eta),$$
and 
$$\int_{X \times X \setminus d} (f(x)-f(y)) (\eta(x) - \eta(y))dJ(x,y)  = \int_{U_{r,R}} (f(x)-f(y)) (\eta(x) - \eta(y))dJ(x,y) $$
with $U_{r,R} = (B_{R+s} \setminus B_{r-s}) \times (B_{R+s} \setminus B_{r-s}) \setminus d$, whenever the integral makes sense. 

%
%
%
%
%
%
%
%

\subsection{Harmonic functions}

In this subsection we introduce (sub)harmonic functions and discuss their basic properties. 

\begin{definition}\label{d:harmonic} A function $f:X\to\R$ is called \emph{harmonic}  (\emph{subharmonic}) if $f\in D(\cE)_{\rm loc}^*$ and
	\begin{align*}
		\int_{X}d\Gm(f,\ph)=0\quad \mbox{($\le0$),}
	\end{align*}
	for all $0\leq \ph\in  D(\cE)_c$.
\end{definition}

\begin{remark}
 Since domains Dirichlet forms are lattices, a harmonic function $f$ statisfies $\int_{X}d\Gm(f,\ph)=0$ for all $\varphi \in D(\cE)_c$.
\end{remark}


Below we will need the following extension of Lemma~\ref{eigenfunction-constant} to $\dEl^*$ functions. It can also be viewed as a Liouville-type theorem.

\begin{lemma}\label{lemma:gamma 0 yields constant}
 Let $\cE$ be irreducible. Then any $f \in \dEl^*$ with 
 $\Gamma(f) = 0$
 is constant. 
\end{lemma}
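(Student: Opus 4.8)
The plan is to reduce the statement to the already-established Lemma~\ref{eigenfunction-constant} by producing, from an $f \in \dEl^*$ with $\Gamma(f)=0$, an honest $L$-harmonic function (equivalently, an element of the kernel of $L$) to which irreducibility applies. The measure condition $\Gamma(f)=0$ means $\int_X d\Gm^{(c)}(f) = 0$ and $\int_X d\Gm^{(j)}(f)=0$; the latter forces $(f(x)-f(y))^2$ to vanish $J$-a.e., so $f$ is $J$-a.e.\ constant along jumps, while the former says the strongly local energy of $f$ vanishes. The immediate difficulty is that $f$ need not lie in $L^2(m)$ (indeed it need not be globally in $D(\cE)$ at all), so we cannot directly say $\cE(f)=0$ and invoke the eigenfunction lemma.

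The first step I would take is to \emph{truncate and localize}. For $n \in \N$, set $f_n = (-n \vee f) \wedge n$ and consider cut-off functions $\eta = \eta_{r,R}$ from Lemma~\ref{lemma:lipschitz-cutoff}; multiplying the bounded function $f_n$ by such a compactly supported cut-off produces candidates in $D(\cE)_c$ (using that $\dEl^*$ functions with compact support lie in $D(\cE)_c$, and that $f_n$ is a $1$-Lipschitz image of $f$, hence still in $\dEl^*$ with $\Gamma(f_n) \leq \Gamma(f) = 0$, so in fact $\Gamma(f_n)=0$ as well). The key computation is then to estimate $\cE(\eta f_n)$ using the Leibniz rule for $\Gm^{(c)}$ and the elementary jump-part identity, showing that all energy contributions are controlled by $\int d\Gamma(f_n)$ on the relevant annulus $B_R \setminus B_r$, which is zero. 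Concretely, one expects a bound of the shape $\cE(\eta f_n) \leq C\big(\int_{\text{annulus}} d\Gamma(f_n) + \|f_n\|^2_\infty \int d\Gamma(\eta)\big)$; since the first term vanishes, only the cut-off energy survives, and this is where the structure has to be exploited carefully.

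The heart of the argument, and the step I expect to be the main obstacle, is getting rid of the residual cut-off energy $\|f_n\|_\infty^2 \int d\Gamma(\eta)$, which does \emph{not} automatically vanish. The clean route is to avoid absolute energy bounds and instead test the harmonicity-type relation directly: since $\Gamma(f)=0$, the bilinear measure $\Gamma(f,\varphi)$ vanishes for every $\varphi \in D(\cE)_c$ by Cauchy--Schwarz for the measure-valued forms (both $\Gm^{(c)}$ and $\Gm^{(j)}$ satisfy $|\Gamma(f,\varphi)|^2 \leq \Gamma(f)\,\Gamma(\varphi)$ as measures). In particular $f$ is harmonic in the sense of Definition~\ref{d:harmonic}. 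I would then argue that $f$ is $T_t$-invariant: for $\varphi \in D(\cE)_c$ the relation $\Gamma(f,\varphi)=0$ combined with the no-killing assumption gives a weak form of $Lf=0$, and irreducibility (via the decomposition of $X$ into $T_t$-invariant pieces) promotes ``locally constant along both the diffusion and jump structure'' to ``globally constant.''

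Finally, I would invoke irreducibility in its structural form: $\Gamma(f)=0$ says $f$ carries no energy whatsoever, so the level sets of $f$ are invariant under the form (any normal contraction composed with a smooth function of $f$ still has zero energy, exactly as in the proof of Lemma~\ref{eigenfunction-constant}). Irreducibility precludes the existence of a nonconstant $f$ whose sublevel sets $\{f \leq c\}$ are all invariant, because an invariant set must be trivial (null or co-null); letting $c$ range over the rationals then forces $f$ to be $m$-a.e.\ constant. The subtle point to handle with care is that the invariance argument must be run for the genuinely possibly-$L^\infty_{\rm loc}$-but-not-$L^2$ function $f$, which is why the truncation $f_n$ is introduced first: one proves each $f_n$ is constant and then passes to the limit $n \to \infty$ to conclude $f$ itself is constant.
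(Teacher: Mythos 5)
Your overall target --- showing that the superlevel sets of $f$ are $\cE$-invariant and then invoking irreducibility --- is exactly the paper's strategy, but the route you take to get there has two genuine gaps. First, the middle of your argument leans on the observation that $\Gamma(f,\varphi)=0$ for all $\varphi\in D(\cE)_c$, i.e.\ that $f$ is weakly harmonic, and suggests that irreducibility then ``promotes'' this to constancy. That step is false: weakly harmonic functions for irreducible Dirichlet forms are in general far from constant (the coordinate function on $\R^n$ for the classical Dirichlet integral, say), which is precisely why the Liouville theorems of this paper are nontrivial. Nothing resembling $T_t$-invariance or a usable ``weak form of $Lf=0$'' can be extracted from $\Gamma(f,\cdot)=0$ alone in a way that yields constancy.

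Second, the step you actually need --- that $A_\alpha=\{f>\alpha\}$ is $\cE$-invariant --- is asserted but not proved, and the justification ``exactly as in the proof of Lemma~\ref{eigenfunction-constant}'' does not transfer: that lemma works with genuine $L^2$-eigenfunctions, where $\cE(C\varphi)\le\cE(\varphi)=0$ makes sense and the one-dimensionality of the ground-state eigenspace is available. Here $f$ need not lie in $D(\cE)$ or even in $L^2(m)$, and your fallback truncation $f_n=(-n\vee f)\wedge n$ does not repair this, since a bounded function is still not in $L^2(m)$ when $m(X)=\infty$, so there is no statement available to conclude that each $f_n$ is constant. The missing content is the concrete approximation the paper uses: set $g_n=(n(f-\alpha)_+)\wedge 1$, note that $g_n$ is, up to an additive constant, a normal contraction applied to a multiple of $f$, hence $\Gamma(g_n)\le n^2\Gamma(f)=0$; then for $\varphi\in D(\cE)\cap C_c(X)$ one shows $\varphi g_n\in D(\cE)$ and, via the Leibniz rules and the Cauchy--Schwarz inequality, $\cE(\varphi g_n)\le \cE(\varphi)^{1/2}\cE(\varphi g_n)^{1/2}$, so that $\cE(\varphi g_n)\le\cE(\varphi)$; lower semicontinuity of $\cE$ then gives $1_{A_\alpha}\varphi\in D(\cE)$ with $\cE(1_{A_\alpha}\varphi)\le\cE(\varphi)$, which is the invariance criterion. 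You correctly identified that the naive bound on $\cE(\eta f_n)$ leaves an uncontrolled cut-off energy term; the resolution is not to pass to weak harmonicity but to multiply $\varphi$ by bounded functions of $f$ that carry zero energy, rather than by $f$ itself.
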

\begin{proof}
 We show that for each $\alpha > 0$ the set $A_\alpha =  \{f > \alpha\}$ is $\cE$-invariant. Since $ \cE $ is irreducible, this is only possible if $f$ is constant. To this end, we show $ \cE(1_{A_\alpha} \varphi) \leq \cE(\varphi) $ which implies the $ \cE $-invariance, see e.g. \cite[Lemma~2.32]{Schmidt17}.
 
 We have 
 $$1_{A_\alpha} = \lim_{n\to \infty} (n (f-\alpha)_+) \wedge 1 $$
 pointwise $m$-a.e. Let $g_n = (n (f-\alpha)_+) \wedge 1$. Up to a constant $g_n$ is a contraction of $f$ and so we have $$ \Gamma(g_n) \leq  n^2 \Gamma(f) = 0 .$$
 
 Now let $\varphi \in D(\cE) \cap C_c(X)$ be given.  Then $\varphi g_n \in D(\cE)$ (here we use that  $\varphi g_n$ has compact support and belongs to $\dEl^* \cap L^\infty(m)$ by \cite[Proposition~3.10]{FLW}). The Leibniz rule for $\Gm^{(c)}$, the ``discrete Leibniz rule'' (i.e., $f(x)g(x) - f(y)g(y) = f(x)(g(x)-g(y)) + g(y)(f(x) - f(y))$ for functions $f,g$ on $X$ and $x,y \in X$) and the Cauchy-Schwarz inequality yield
 \begin{align*}
 &\cE(\varphi g_n)  = \int_X \varphi d\Gamma(g_n,g_n\varphi) + \int_X g_n d\Gamma(\varphi,g_n\varphi)\\
 &\leq \left(\int_X d\Gamma(g_n)\right)^{1/2} \left(\int_X |\varphi|^2  d\Gamma(\varphi g_n)\right)^{1/2} + \left(\int_X |g_n|^2 d\Gamma(\varphi)\right)^{1/2} \left(\int_X   d\Gamma(\varphi g_n)\right)^{1/2}\\
 &\leq \cE(\varphi)^{1/2} \cE(\varphi g_n)^{1/2}.
 \end{align*}
 This shows $\cE(\varphi g_n) \leq \cE(\varphi)$. Since by dominated convergence $1_{A_\alpha} \varphi = \lim_{n\to \infty}  \varphi g_n$ in $L^2(m)$, the lower semicontinuity of $\cE$ and this inequality yield $1_{A_\alpha} \varphi \in D(\cE)$ and 
$$\cE(1_{A_\alpha} \varphi) \leq \cE(\varphi).$$
Since $D(\cE) \cap C_c(X)$ is dense in $D(\cE)$, this inequality extends to all $\varphi \in D(\cE)$ (use lower semicontinuity) and we obtain the $\cE$-invariance of $A_\alpha$. 
\end{proof}
%


\section{A  Caccioppoli inequality}

The following Caccioppoli-type inequality is the key to Yau's $L^{p}$-Liouville theorem. A somewhat more subtle estimate is needed for Karp's result. Throughout $\cE$ is a regular Dirichlet form without killing and $\rho$ is an intrinsic metric for $\cE$.

\begin{thm}[Caccioppoli-type inequality] \label{t:Caccioppoli} Assume the distance balls of $\rho$ are precompact and $p\in (1,\infty)$. Then, there is $C>0$ such that for  every non-negative subharmonic function $f$  and all $0<r<R$ such that $f1_{B_{R + s}\setminus B_{r - s}} \in L^q(m)$ for some $q \in [2p-2,\infty]$ we have
	\begin{align*}
		\int_{B_r}f^{p-2}d\Gm^{(c)}(f)+\int_{X\times B_r \setminus d}(f(x)\vee f(y))^{p-2}
		&(f(x)-f(y))^{2}dJ(x,y)\\
		&\leq \frac{C}{(R-r)^{2}}\av{f1_{B_{R+s}\setminus
			B_{r-s}}}_{p}^{p},
	\end{align*}
	where $s$ is the jump size of $\rho$. 
\end{thm}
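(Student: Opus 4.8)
The plan is to test the subharmonicity inequality $\int_X d\Gamma(f,\varphi) \leq 0$ against the cleverly chosen test function $\varphi = f^{p-1}\eta^2$, where $\eta = \eta_{r,R}$ is the Lipschitz cut-off from Lemma~\ref{lemma:lipschitz-cutoff}. First I would verify that this $\varphi$ is a legitimate test function, i.e.\ that $\varphi \in D(\cE)_c$: it has compact support (inside $B_R$ since $\eta$ vanishes outside $B_R$, and balls are precompact), and the integrability hypothesis $f 1_{B_{R+s}\setminus B_{r-s}} \in L^q(m)$ with $q \geq 2p-2$ is exactly what guarantees $\varphi \in \dEl^* \cap L^\infty$, hence $\varphi \in D(\cE)_c$ by the compact-support result cited from \cite[Theorem~3.5]{FLW}. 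This integrability assumption is precisely the ``technical issue concerning the existence of certain integrals'' flagged in the introduction, and I expect the careful justification that $\varphi$ lies in the right space to be the main obstacle.

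Next I would split $\int_X d\Gamma(f,\varphi) = \int_X d\Gamma^{(c)}(f,\varphi) + \int_X d\Gamma^{(j)}(f,\varphi)$ and expand each piece. For the strongly local part I would apply the Leibniz and chain rules: writing $\varphi = f^{p-1}\eta^2$, differentiating produces a main term proportional to $f^{p-2}\eta^2\, d\Gamma^{(c)}(f)$ (with a positive factor $(p-1)$) plus a cross term involving $f^{p-1}\eta\, d\Gamma^{(c)}(f,\eta)$. For the jump part I would use the discrete product rule on $\varphi(x)-\varphi(y)$ together with the elementary pointwise inequality controlling $(f(x)^{p-1}-f(y)^{p-1})(f(x)-f(y))$ from below by a multiple of $(f(x)\vee f(y))^{p-2}(f(x)-f(y))^2$. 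The subharmonicity gives $0 \geq \int d\Gamma(f,\varphi)$, which after rearranging yields the desired ``good'' terms on the left bounded by the cross terms.

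The final step is to absorb the cross terms. Each cross term is estimated by Cauchy--Schwarz into a product of the square root of a good term (the left-hand side quantities, cut off by $\eta$) and a square root of an error term of the form $\int f^p\, d\Gamma^{(c)}(\eta)$ respectively a jump analogue; using the gradient bounds $\Gamma^{(c)}(\eta) \leq (R-r)^{-2} 1_{B_R\setminus B_r} m^{(c)}$ and $\Gamma^{(j)}(\eta) \leq (R-r)^{-2} 1_{B_{R+s}\setminus B_{r-s}} m^{(j)}$ from Lemma~\ref{lemma:lipschitz-cutoff}, together with $m^{(c)}+m^{(j)}\leq m$, these error terms are controlled by $(R-r)^{-2}\av{f 1_{B_{R+s}\setminus B_{r-s}}}_p^p$. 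Applying Young's inequality $ab \leq \tfrac12 a^2 + \tfrac12 b^2$ lets me move half of the good term to the left-hand side, and since $\eta = 1$ on $B_r$, the remaining good term dominates the integrals over $B_r$ appearing in the statement. Collecting the constants gives the claimed $C$. The support-localization identities displayed just before Definition~\ref{d:harmonic} (restricting the $\Gamma^{(c)}$ and $\Gamma^{(j)}$ integrals to annuli) are what make the error terms supported in $B_{R+s}\setminus B_{r-s}$, explaining the shifted annulus and the role of the jump size $s$.
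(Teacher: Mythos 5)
There is a genuine gap at the very first step: the function $\varphi = f^{p-1}\eta^2$ is in general \emph{not} an admissible test function, and the integrability hypothesis does not rescue it in the way you claim. The hypothesis $f1_{B_{R+s}\setminus B_{r-s}} \in L^q(m)$ with $q \in [2p-2,\infty]$ controls $f$ only on the annulus and only in an $L^q$ sense; it gives no $L^\infty$ bound on $B_r$ (where $\eta = 1$), so $\varphi$ need not be bounded, and the ideal property of $D(\cE)_c \cap L^\infty(m)$ in $D(\cE)\cap L^\infty(m)$ is not available. Moreover, $f^{p-1}$ need not lie in $\dEl^*$ at all: for $p>2$ the map $t\mapsto t^{p-1}$ fails to be Lipschitz at infinity and $f$ is unbounded above, while for $1<p<2$ it fails to be Lipschitz at $0$ (and the weight $f^{p-2}$ in the target integral blows up there). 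Several of the intermediate integrals you write down are therefore not known to exist.

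The paper's proof repairs exactly this by a double truncation: one tests against $\eta^2 g_j^{p-1}$ with $g_j = (f\wedge n)\vee j^{-1}$, for which $t\mapsto t^{p-1}$ is Lipschitz on $[j^{-1},n]$, so $g_j^{p-1}\in \dEl^*\cap L^\infty(m)$ and the test function genuinely lies in $D(\cE)_c$. One then lets $j\to\infty$ (Fatou and dominated convergence) to get the key estimate of Lemma~\ref{l:keyestimate}, and finally $n\to\infty$. It is in this last limit --- and only there --- that the hypothesis $q\ge 2p-2$ enters: on $\{f\ge n\}$ the truncated power equals $n^{p-1}$, and Chebyshev's inequality produces an error term of order $n^{p-1-q/2}\av{f1_{B_{R+s}\setminus B_{r-s}}}_q^{q/2}$, which tends to $0$ precisely because $q\ge 2p-2$. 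Your remaining steps (Leibniz/chain rule for $\Gm^{(c)}$, the discrete product rule and the elementary inequality $\nabla_{xy}g^{p-1}\cdot\nabla_{xy}g \ge C(g(x)\vee g(y))^{p-2}(\nabla_{xy}g)^2$, Cauchy--Schwarz plus Young to absorb the cross terms, and the gradient bounds on $\eta$) match the paper's argument, but without the truncation scheme the proof does not get off the ground, and the role you assign to the $L^q$ hypothesis is incorrect.
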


\begin{remark}
 \begin{enumerate}[(a)]
  \item  Note that $p \leq 2$ if and only if $2p - 2 \leq p$. Hence, in this case,  $f1_{B_{R + s}\setminus B_{r - s}} \in L^q(m)$ for some $q \geq 2p-2$ is already satisfied if the right side of the Caccioppoli-type inequality is finite.
  
  \item If the jump-size of $\rho$ is finite, the set $B_{R+s}$ is precompact and hence has finite measure.  In this case, for $p > 2$ the assumption on the integrability on $f$ is satisfied for all $0 \leq r < R$ if and only if $f \in L^{2p-2}_{\rm loc}(m)$.
 \end{enumerate}
 
\end{remark}

\begin{remark}For $p\ge2$ the inequality above yields (with a larger constant $C$ and under suitable conditions on $f$)
	\begin{align*}
		\int_{X}f^{p-2}d\Gm(f)&\leq \frac{C}{(R-r)^{2}}\av{f1_{B_{R+s}\setminus
			B_{r-s}}}_{p}^{p}.
	\end{align*}
\end{remark}

Let $\mathrm{Lip}_{c}(X)$ be the space of Lipshitz continuous functions with compact support with respect to the intrinsic metric $\rho$.


\begin{lemma}[The key estimate]\label{l:keyestimate} 
	Let $p\in (1,\infty)$ and let $n \in \N$. For every non-negative subharmonic function $f$   and $\ph\in \mathrm{Lip}_{c}(X)$, we have
	\begin{align*}
		&\int_{A_n}f^{p-2}\ph^{2} d\Gm^{(c)}(f)+\int_{X\times X \setminus d}(f_n(x)\vee f_n(y))^{p-2}\ph(y)^{2}(f_n(x)-f_n(y))^{2}dJ(x,y)\\
		&\leq -C \int_{X} f_n^{p-1}\ph d\Gm^{(c)}(f,\ph) \\
		&\qquad - C\int_{X\times X \setminus d} f_n^{p-1}(x)\ph(y)(f(x)-f(y))(\ph(x)-\ph(y))dJ(x,y) < \infty,
	\end{align*}
	where $f_n  =  f\wedge n$, $A_n = \{  f < n\}$ and $C=2/((p-1)\wedge 1)$.
\end{lemma}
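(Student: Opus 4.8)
The plan is to test the subharmonicity of $f$ against the function $\psi = f_n^{p-1}\varphi^2$ and to turn the resulting inequality $\int_X d\Gm(f,\psi)\le 0$ into the asserted estimate via the Leibniz rule for $\Gm^{(c)}$, the discrete Leibniz rule for the jump part, and two elementary pointwise inequalities. First I would check that $\psi$ is admissible, i.e.\ $0\le\psi\in D(\cE)_c$. Non-negativity is clear since $f\ge 0$, and $\psi$ has compact support because $\varphi$ does. Membership in $\dEl^*$ follows since $\varphi\in\mathrm{Lip}_c(X)\subseteq\dEl^*$, since $f\mapsto f_n^{p-1}=(f\wedge n)^{p-1}$ is a bounded modification of $f\in\dEl^*$, and since $\dEl^*$ is stable under products with bounded functions of $\dEl^*$ (the product statements of \cite{FLW} quoted in Section~\ref{s:setup}); a compactly supported element of $\dEl^*$ lies in $D(\cE)_c$. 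Thus subharmonicity yields $\int_X d\Gm^{(c)}(f,\psi)+\int_{X\times X\setminus d}(f(x)-f(y))(\psi(x)-\psi(y))\,dJ\le 0$, and this whole quantity is an absolutely convergent finite number.

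For the strongly local part I would use the Leibniz rule $d\Gm^{(c)}(f,f_n^{p-1}\varphi^2)=\varphi^2 d\Gm^{(c)}(f,f_n^{p-1})+f_n^{p-1}d\Gm^{(c)}(f,\varphi^2)$ together with $d\Gm^{(c)}(f,\varphi^2)=2\varphi\,d\Gm^{(c)}(f,\varphi)$ and the strong locality identity $d\Gm^{(c)}(f,f_n)=1_{A_n}\,d\Gm^{(c)}(f)$. Applying the chain rule to $t\mapsto t^{p-1}$ on the set $\{f>0\}$ (the energy measure $\Gm^{(c)}(f)$ does not charge the level set $\{f=0\}$, so the non-smoothness of $t^{p-1}$ at $0$ for $p<2$ is harmless) gives $\varphi^2 d\Gm^{(c)}(f,f_n^{p-1})=(p-1)f^{p-2}\varphi^2 1_{A_n}\,d\Gm^{(c)}(f)$. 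Hence the local contribution equals $(p-1)\int_{A_n}f^{p-2}\varphi^2 d\Gm^{(c)}(f)+2\int_X f_n^{p-1}\varphi\,d\Gm^{(c)}(f,\varphi)$.

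For the jump part, abbreviating $a=f(x),b=f(y),a_n=f_n(x),b_n=f_n(y),u=\varphi(x),v=\varphi(y)$, I would split the integrand by the discrete Leibniz rule as $(a-b)(a_n^{p-1}u^2-b_n^{p-1}v^2)=v^2(a-b)(a_n^{p-1}-b_n^{p-1})+a_n^{p-1}(a-b)(u^2-v^2)$. The first summand is controlled from below by the elementary inequality $(a-b)(a_n^{p-1}-b_n^{p-1})\ge((p-1)\wedge1)(a_n\vee b_n)^{p-2}(a_n-b_n)^2$, valid for all $a,b\ge0$; assuming $a\ge b$ and using that truncation is a contraction, this reduces to the one-variable estimate $r^{p-1}-1\ge((p-1)\wedge1)r^{p-2}(r-1)$ for $r\ge1$, which I would verify by differentiation (the cases $p\ge 2$ and $1<p<2$ being treated separately, the constant being sharp at $r=1$ and $r\to\infty$). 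For the second summand I would use the symmetry of $J$ through the pointwise identity $\tfrac12 a_n^{p-1}(a-b)(u^2-v^2)-a_n^{p-1}v(a-b)(u-v)=\tfrac12 a_n^{p-1}(a-b)(u-v)^2$, whose symmetrization $\tfrac14(a_n^{p-1}-b_n^{p-1})(a-b)(u-v)^2$ is non-negative; hence $\tfrac12\int a_n^{p-1}(a-b)(u^2-v^2)\,dJ\ge\int a_n^{p-1}v(a-b)(u-v)\,dJ$. Writing $c_p=(p-1)\wedge1$ and $C=2/c_p$, substituting the two lower bounds into the subharmonicity inequality, using $(p-1)\ge c_p$ for the $\Gm^{(c)}$ term, dividing by $c_p$, and finally replacing $-\tfrac{1}{c_p}\int a_n^{p-1}(a-b)(u^2-v^2)\,dJ$ by the larger quantity $-C\int a_n^{p-1}v(a-b)(u-v)\,dJ$ produces exactly the claimed bound.

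Finiteness of the right-hand side follows from Cauchy--Schwarz: the jump term is bounded by $n^{p-1}\av{\varphi}_\infty\big(\int_{(\supp\varphi)\times X\setminus d}(f(x)-f(y))^2 dJ\big)^{1/2}\Gm^{(j)}(\varphi)(X)^{1/2}<\infty$ because $f\in\dEl^*$ and $\varphi\in D(\cE)$, while the local cross term is finite since $\Gm^{(c)}(f,\varphi)$ is a finite signed measure and $f_n^{p-1}\varphi$ is bounded with compact support; the same bounds justify the absolute convergence used to split the jump integral. The main obstacle I anticipate is twofold: establishing the jump inequality with the sharp constant $(p-1)\wedge1$ and matching it with the $\Gm^{(c)}$ constant $p-1$ so that a single $C$ governs both, and organizing the finiteness and level-set arguments (especially for $1<p<2$, where $t^{p-1}$ is not $C^1$ at $0$ and $f^{p-2}$ is singular) so that all the localized integrals converge and the algebraic manipulations are legitimate.
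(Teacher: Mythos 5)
Your overall strategy is the one the paper uses: test subharmonicity against (a version of) $f_n^{p-1}\varphi^2$, split via the Leibniz rules, invoke the elementary inequality $\nabla_{xy}g^{p-1}\ge((p-1)\wedge1)(g(x)\vee g(y))^{p-2}\nabla_{xy}g$ for the jump part and the chain rule for the local part, and absorb the $(\nabla_{xy}\varphi)^2$ remainder by symmetrization. All of that matches, including the constant $C=2/((p-1)\wedge1)$.

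There is, however, a genuine gap for $1<p<2$: your admissibility argument for the test function $\psi=f_n^{p-1}\varphi^2$ does not go through. You assert that $f_n^{p-1}$ lies in $D(\cE)_{\rm loc}^*$ because it is a ``bounded modification'' of $f$, but membership in $D(\cE)_{\rm loc}^*$ is only stable under \emph{Lipschitz} post-composition, and $t\mapsto t^{p-1}$ is not Lipschitz at $0$ when $p<2$. Concretely, $(a^{p-1}-b^{p-1})^2$ is not dominated by a multiple of $(a-b)^2$ near $0$ (take $b=0$, $a=\eps$: the ratio is $\eps^{2p-4}\to\infty$), so the finiteness of $\int_{K\times X\setminus d}(f(x)-f(y))^2\,dJ$ does not imply the corresponding finiteness for $f_n^{p-1}$, and $\psi$ need not belong to $D(\cE)_c$. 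Your proposed fix --- applying the chain rule away from $\{f=0\}$ because $\Gm^{(c)}(f)$ does not charge that level set --- repairs only the strongly local contribution; it does nothing for the jump part or for the admissibility of $\psi$ itself, which is where subharmonicity must be tested. The paper avoids this by first replacing $f_n$ with $g_j=f_n\vee j^{-1}$, so that $t\mapsto t^{p-1}$ is Lipschitz on $[j^{-1},n]$ and $g_j^{p-1}\in D(\cE)_{\rm loc}^*$ genuinely holds; one then proves your inequality with $g_j$ in place of $f_n$ and $\{j^{-1}<f<n\}$ in place of $A_n$, and passes to the limit $j\to\infty$ by Fatou's lemma on the left and dominated convergence on the right. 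For $p\ge2$ your argument is complete as written, since then $t\mapsto t^{p-1}$ is Lipschitz on $[0,n]$ and no regularization is needed.
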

\begin{proof}
	For $j \in \N$, we let $g_j = f_n \vee j^{-1}$. We show the inequality with $f_n$ replaced by $g_j$ and $A_n$ replaced by $\{j^{-1} < f < n\}$. Then  the statement follows from Fatou's lemma and Lebegue's dominated convergence theorem after letting $j \to \infty$. Note that the integral on the right hand side of the equation exists due to the boundedness of $f_n$ and the definition of $\dEl^*$.

	Since $g_j \in \dEl^*$ is bounded from below by $1/j$ and from above by $n$, we have $g_j^{p-1} \in \dEl^*$ (here we use that $[j^{-1},n] \to \R, t \mapsto t^{p-1}$ is Lipschitz). Using the boundedness of $g_j^{p-1}$, for $\ph\in \mathrm{Lip}_{c}(X) \subseteq D(\cE)_c \cap L^\infty(m)  $ we   obtain   $ \ph^{2}g_{j}^{p-1} \in D(\cE)_c$ as $ D(\cE)_c \cap L^\infty(m) $ is an ideal in the algebra $ D(\cE) \cap L^\infty(m) $. The  subharmonicity and non-negativity  of $f$ yields
	\begin{align*}
		0\geq \int_{X}d\Gm^{(c)}(f,\ph^{2}g^{p-1}_{j}) + \int_{X}d\Gm^{(j)}(f,\ph^{2}g^{p-1}_{j}).
	\end{align*}
To shorten notation, we write $ \nabla_{xy}g=g(x)-g(y) $ for $ x,y\in X $ and functions $ g $. 
	We apply the ``discrete Leibniz rules'' $\nabla_{xy}(fg) = g(x) \nabla_{xy}f +f(y) \nabla_{xy} g$ and $\nabla_{xy}f^2 = 2f(y) \nabla_{xy}f + (\nabla_{xy}f)^2$ to the jump term
	\begin{align*}
		\lefteqn{ \int_{X}d\Gm^{(j)}(f,\ph^{2}g^{p-1}_{j})}\\
		=&\int_{X\times X \setminus d}\nabla_{xy} f\cdot \big(\ph^{2} (y) \nabla_{xy} g^{p-1}_{j}+g^{p-1}_{j}(x)\nabla_{xy}\ph^{2}\big)dJ(x,y)\\
		=&\int_{X\times X\setminus  d}\nabla_{xy} f
		\cdot\big(\ph^{2} (y) \nabla_{xy} g^{p-1}_{j} +2g^{p-1}_{j}(x)\ph(y)\nabla_{xy}\ph +g^{p-1}_{j}(x)(\nabla_{xy}\ph)^{2}\big)dJ(x,y).
	\end{align*}
We start by estimating the first term, where we apply \cite[Lemma~2.9]{HuaKeller13}, which states that if $ \nabla_{xy} g\ge0 $ for a non-negative function $ g $, then
\begin{align*}
	\nabla_{xy} g^{p-1} \ge   C(g(x)\vee g(y))^{p-2}\nabla_{xy}g
\end{align*}
for the constant $ C=(p-1)\wedge 1 $. Since  $ \nabla_{xy}f\ge 0 $ implies $ \nabla_{xy}g^{p-1}_{j}\ge0 $, 
$ \nabla_{xy}f\nabla_{xy} g_{j}^{p-1}=\nabla_{yx}f\nabla_{yx}g_{j}^{p-1}\ge0 $ and $ \nabla_{xy}f\nabla_{xy} g_{j}=\nabla_{yx}f\nabla_{yx}g_{j} \geq |\nabla_{xy} g_{j}|^2$, this gives
	\begin{align*}
		\ph^{2} (y)\nabla_{xy} f\cdot\nabla_{xy} g^{p-1}_{j} \ge  C \ph^{2} (y)(g_{j}(x)\vee g_{j}(y))^{p-2} |\nabla_{xy} g_{j}|^2
	\end{align*}
for the first term in the estimate above. We leave the second term as it is for now. For third term  we obtain  with having $  \nabla_{xy}f\nabla_{xy} g_{j}\ge0 $ in mind
	\begin{align*}
		0&\leq\int_{X\times X \setminus d} \nabla_{xy} g^{p-1}_{j}\nabla_{xy} f(\nabla_{xy}\ph)^{2}dJ(x,y)\\
		&=2\int_{X\times X \setminus d} g^{p-1}_{j}(x)\nabla_{xy} f(\nabla_{xy}\ph)^{2}dJ(x,y),
\end{align*}
	where the equality holds as the term on the right hand side converges absolutely by Hölder's inequality. Hence, we obtain
	\begin{align*}
		\int_{X}d\Gm^{(j)}(f,\ph^{2}g^{p-1}_{j})\geq& C\int_{X\times X \setminus d}\ph^{2} (y)(g_{j}(x)\vee g_{j}(y))^{p-2}  |\nabla_{xy} g_{j}|^2 dJ(x,y) \\ &\quad+2\int_{X\times X \setminus d}g^{p-1}_{j}(x)\ph(y)\nabla_{xy} f\nabla_{xy}\ph dJ(x,y).
	\end{align*}
	Applying the Leibniz rule and the chain rule to the strongly local term we get immediately
	\begin{align*}
		\int_{X}d\Gm^{(c)}(f,\ph^{2}g^{p-1}_{j})&= (p-1)\int_{X}\ph^{2}g^{p-2}_{j}d\Gm^{(c)}(f,g_{j}) +2\int_{X}g^{p-1}_{j}\ph d\Gm^{(c)}(f,\ph)\\
		&=(p-1)\int_{\{m^{-1} < f < n\}}\ph^{2}f^{p-2} d\Gm^{(c)}(f) +2\int_{X}g^{p-1}_{j}\ph d\Gm^{(c)}(f,\ph).
	\end{align*}
 	For the last equality we used the truncation property of $\Gamma^{(c)}$ discussed in \cite[Appendix~4.1]{Sturm94}. Putting the two estimates for the terms $\int_{X}d\Gm^{(j)}(f,\ph^{2}g^{p-1}_{j})$ and  $\int_{X}d\Gm^{(c)}(f,\ph^{2}g^{p-1}_{j})$ into the inequality at the beginning of the proof yields the statement.
\end{proof}

\begin{proof}[Proof of the Caccioppoli-type inequality, Theorem~\ref{t:Caccioppoli}] Let $\eta = \eta_{r,R}$ be the cut-off functions discussed in Lemma~\ref{lemma:lipschitz-cutoff}. We use $\varphi = \eta = \eta_{r,R}$   in our key estimate and deal with the terms on the right-hand side separately. Without loss of generality we can assume $f 1_{B_{R+s} \setminus B_{r-s}} \in L^p(m)$ for otherwise there is nothing to show. We further assume $q < \infty$ as it will become clear in the course of the proof that the case $q = \infty$ is simpler and follows along the same lines. 

We use the inequality $|Q(u,v)| \leq \varepsilon Q(u,u)^2 + \frac{1}{4\varepsilon} Q(v,v)^2$, which holds for nonnegative bilinear forms $Q$ and $\varepsilon > 0$, and $ \eta\le 1 $  to estimate
\begin{align*}
		C \left| \int_{A_n} f_n^{p-1}\eta d\Gm^{(c)}(f,\eta)\right| &\leq \frac{1}{2} \int_{A_n} f_n^{p-2}\eta^2 d\Gamma^{(c)}(f) + C' \int_{A_n} f^p d\Gamma^{(c)}(\eta) \\
		&\leq \frac{1}{2} \int_{A_n} f_n^{p-2}\eta^2 d\Gamma^{(c)}(f) + \frac{C'}{(R-r)^2} \av{f^p 1_{B_R \setminus B_r}}_p^p,
	\end{align*}
where $ f_{n}=f\wedge n $ and $ A_{n}=\{f<n\} $.
%
%
%
%
Moreover, $f_n^{p-1} = n^{p-1}$ on $X\setminus A_n$, Cauchy-Schwarz inequality, the cut-off properties of $\eta$ and Chebyshev's inequality  yield 
\begin{multline*}
		\left| \int_{X \setminus A_n} f_n^{p-1}\eta d\Gm^{(c)}(f,\eta)\right| \leq  n^{p-1}\left(\int_{X \setminus A_n}  d\Gm^{(c)}(\eta)\right)^{1/2}\left(\int_{X \setminus A_n} \eta^2 d\Gm^{(c)}(f)\right)^{1/2}\\
		\leq \frac{n^{p-1}}{(R-r)}m( \{f \geq n\} \cap B_R\setminus B_r )^{1/2}\left(\int_{X \setminus A_n} \eta^2 d\Gm^{(c)}(f)\right)^{1/2} \\
		\leq  \frac{n^{p-1 - q/2}}{(R-r)} \av{f1_{B_R \setminus B_r}}^{q/2} \left(\int_{X \setminus A_n} \eta^2 d\Gm^{(c)}(f)\right)^{1/2}. 
	\end{multline*}
Since $\infty > q \geq 2p-2$ and $f1_{B_R \setminus B_r} \in L^q(m)$, the right side of this inequality converges to $0$, as $n \to \infty$.  A similar reasoning yields 
\begin{align*}
 &C\left|\int_{A_n\times X \setminus d} f_n^{p-1}(x)\eta(y)(f(x)-f(y))(\eta(x)-\eta(y))dJ(x,y)\right|\\
 &\leq C\int_{A_n\times X \setminus d} (f_n(x) \vee f_n(y))^{p-1}\eta(y)|f(x)-f(y)||\eta(x)-\eta(y)|dJ(x,y)\\
 &\leq \frac{1}{2} \int_{A_n\times X \setminus d} (f_n(x) \vee f_n(y))^{p-2}\eta^2(y)(f(x)-f(y))^2 dJ(x,y)\\
 &\quad + C' \int_{A_n\times X \setminus d} (f_n(x) \vee f_n(y))^{p}(\eta(x)-\eta(y))^2 dJ(x,y)\\
  &\leq \frac{1}{2} \int_{A_n\times X \setminus d} (f_n(x) \vee f_n(y))^{p-2}\eta^2(y)(f(x)-f(y))^2 dJ(x,y)\\
 &\quad + \frac{C''}{(R-r)^2} \av{f^p1_{B_{R + s} \setminus B_{r-s}}}_p^p. 
\end{align*}
For the last inequality we used the properties of $\eta$ and $(a \wedge b)^p \leq a^p + b^p$ as well as the symmetry of $J$. Moreover, as for the strongly local part we obtain using the cut-off properties of $\eta$  
\begin{align*}
 &\int_{(X \setminus A_n)\times X\setminus d} f_n(x)^{p-1}\eta(y)|f(x)-f(y)||\eta(x)-\eta(y)|dJ(x,y)\\
 &\leq  \frac{n^{p-1 - q/2}}{R-r} \av{f1_{B_{R+s} \setminus B_{r-s}}}^{q/2} \left(\int_{X \setminus A_n} \eta^2 d\Gm^{(j)}(f)\right)^{1/2},
\end{align*}
with the right side converging to $0$, as $n \to \infty$. 

Plugging these estimates into our key estimate in Lemma~\ref{l:keyestimate}, using $f = f_n$ on $A_n$ and $\eta = 1$ on $B_r$, yields a constant $C > 0$ such that for all $n \in \N$  
\begin{align*}
 &\int_{A_n \cap B_r}f^{p-2} d\Gm^{(c)}(f)+\int_{A_n\times (A_n \cap B_r) \setminus d}(f(x)\vee f(y))^{p-2} (f(x)-f(y))^{2}dJ(x,y)\\
		&\quad \leq \frac{C}{(R-r)^2} \av{f1_{B_{R+s} \setminus B_{r-s}}}_p^p + E_n,
\end{align*}
with $E_n \to 0$, as $n \to \infty$. With this at hand the statement follows after letting $n \to \infty$. 

In the case $q = \infty$ we can use the same estimates as above without invoking the limit $n \to \infty$, as in this case $A_n = X$ for some $n \in \N$ and $E_n = 0$ for some $n \in \N$.
\end{proof}

 For  proving Yau's theorem the Caccioppoli-type inequality is sufficient. For Karp's theorem we need one further estimate, which follows from our key estimate and the Caccioppoli-type inequality. Here,  $\eta = \eta_{r,R}$ denotes the cut-off function discussed in Lemma~\ref{lemma:lipschitz-cutoff}.

\begin{lemma}\label{lemma:additional inequality}
Assume the distance balls of $\rho$ are precompact and the jump-size $s$ is finite.  For  $p\in (1,\infty)$ there is $C>0$ such that for  every non-negative subharmonic function $f \in L^{q}_{\rm loc}(m)$ with $q = \max\{p,2p-2\}$ and all $0<r<R$   we have
	\begin{multline*}
		\left(\int_{X} f^{p-2} \eta^2 d\Gm^{(c)}(f)+\int_{X \times X \setminus d}(f(x)\vee f(y))^{p-2} \eta^2(y)
		(f(x)-f(y))^{2} dJ(x,y) \right)^2\\
		\leq \frac{C}{(R-r)^{2}}\av{f1_{B_{R+s}\setminus B_{r-s}}}_{p}^{p} \Biggl(\int_{B_R \setminus B_r}  f^{p-2} \eta^2 d\Gm^{(c)}(f) \\
		\qquad+ \int_{U_{R+s} \setminus U_{r-s} }(f(x)\vee f(y))^{p-2} \eta^2(y)
		(f(x)-f(y))^{2} dJ(x,y) \Biggr),
	\end{multline*}
	with $\eta = \eta_{r,R}$ and $U_r = B_r \times B_r \setminus d$.
\end{lemma}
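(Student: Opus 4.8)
The plan is to re-run the proof of Theorem~\ref{t:Caccioppoli} starting from the key estimate of Lemma~\ref{l:keyestimate} with the same cut-off $\varphi=\eta=\eta_{r,R}$, but to \emph{not} use the absorbing inequality $|Q(u,v)|\le\varepsilon Q(u,u)+\tfrac1{4\varepsilon}Q(v,v)$ on the right-hand side. Instead I keep the full Cauchy--Schwarz product, arranging the two factors so that one reproduces the annular energy on the right of the claim and the other is controlled by $\tfrac1{R-r}\av{f1_{B_{R+s}\setminus B_{r-s}}}_p^{p/2}$. Throughout let $I$ denote the expression whose square is the left-hand side of the asserted inequality, and write $J^{(c)}$ and $J^{(j)}$ for the strongly local and the jump annular energies forming the bracket $J=J^{(c)}+J^{(j)}$ on the right.

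Plugging $\varphi=\eta$ into Lemma~\ref{l:keyestimate}, the left-hand side there (call it $I_n$) is finite, and I treat the two terms on the right-hand side. Strong locality of $\Gm^{(c)}$ together with $\eta\equiv 1$ on $B_r$ and $\eta\equiv 0$ off $B_R$ confines the strongly local integral $\int_X f_n^{p-1}\eta\,d\Gm^{(c)}(f,\eta)$ to $B_R\setminus B_r$; the factor $(\eta(x)-\eta(y))$ together with the finiteness of the jump size $s$ confines the jump integral to $U_{R+s}\setminus U_{r-s}$, exactly as recorded in the display after Lemma~\ref{lemma:lipschitz-cutoff}. As in the proof of Theorem~\ref{t:Caccioppoli} I split each integral over $A_n=\{f<n\}$ and its complement.

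On $A_n$ one has $f_n=f$ at the relevant points, and bounding the jump weight by the untruncated $f$ via $f_n^{p-1}(x)\le(f(x)\vee f(y))^{p-1}$, Cauchy--Schwarz applied after the factorizations
\[
f^{p-1}\eta=\bigl(f^{(p-2)/2}\eta\bigr)\cdot f^{p/2},\qquad f^{p-1}(x)\eta(y)=\bigl((f(x)\vee f(y))^{(p-2)/2}\eta(y)\bigr)\cdot (f(x)\vee f(y))^{p/2}
\]
yields in each case a first factor dominated by $(J^{(c)})^{1/2}$, respectively $(J^{(j)})^{1/2}$ (integrands nonnegative, $A_n\cap(B_R\setminus B_r)\subseteq B_R\setminus B_r$), and a second factor $\bigl(\int_{B_R\setminus B_r}f^p\,d\Gm^{(c)}(\eta)\bigr)^{1/2}$, respectively $\bigl(\int_{U_{R+s}\setminus U_{r-s}}(f(x)\vee f(y))^p(\eta(x)-\eta(y))^2\,dJ(x,y)\bigr)^{1/2}$. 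By the cut-off bounds of Lemma~\ref{lemma:lipschitz-cutoff}, $m^{(c)}+m^{(j)}\le m$, and $(f(x)\vee f(y))^p\le f(x)^p+f(y)^p$ with the symmetry of $J$, both second factors are bounded by $\tfrac{C}{R-r}\av{f1_{B_{R+s}\setminus B_{r-s}}}_p^{p/2}$. The two integrals over $X\setminus A_n$ are handled verbatim as the corresponding remainders in Theorem~\ref{t:Caccioppoli}: using $f_n=n$ there, Cauchy--Schwarz, Chebyshev's inequality and $f\in L^q_{\rm loc}(m)$ with $q=\max\{p,2p-2\}$, they tend to $0$ as $n\to\infty$. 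This gives $I_n\le \tfrac{C}{R-r}\av{f1_{B_{R+s}\setminus B_{r-s}}}_p^{p/2}\bigl((J^{(c)})^{1/2}+(J^{(j)})^{1/2}\bigr)+E_n$ with $E_n\to 0$. Letting $n\to\infty$, Fatou's lemma gives $I\le\liminf_n I_n$, and together with $a^{1/2}+b^{1/2}\le\sqrt2\,(a+b)^{1/2}$ and $J^{(c)}+J^{(j)}=J$ one obtains $I\le \tfrac{C'}{R-r}\av{f1_{B_{R+s}\setminus B_{r-s}}}_p^{p/2}J^{1/2}$; squaring is the assertion.

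The main obstacle, as in Theorem~\ref{t:Caccioppoli}, is the truncation. One must ensure that the annular energies $J^{(c)},J^{(j)}$ are finite, which is exactly where the Caccioppoli inequality itself and the local integrability $f\in L^q_{\rm loc}(m)$ enter, and one must justify the remainder terms $E_n\to 0$, again through Chebyshev's inequality and the $L^q_{\rm loc}$ bound as in the earlier proof. A secondary, purely bookkeeping point is to verify the two support restrictions carefully (the region $B_R\setminus B_r$ from strong locality and $U_{R+s}\setminus U_{r-s}$ from the finite jump size), since it is precisely these restrictions that make the first Cauchy--Schwarz factors reproduce the \emph{annular} energies of the statement rather than energies over all of $X$.
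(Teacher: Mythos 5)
Your proposal is correct and follows essentially the same route as the paper's proof: apply the key estimate of Lemma~\ref{l:keyestimate} with $\varphi=\eta_{r,R}$, use Cauchy--Schwarz (instead of the absorbing Young-type inequality) so that the annular energies survive as one factor while the other is bounded by $\frac{C}{R-r}\av{f1_{B_{R+s}\setminus B_{r-s}}}_p^{p/2}$ via the cut-off bounds and the finite jump size, and pass to the limit $n\to\infty$ with finiteness of the annular energies supplied by Theorem~\ref{t:Caccioppoli}. The only cosmetic difference is that the paper keeps $f_n$ in both Cauchy--Schwarz factors and takes the limit in the resulting inequality at the very end, whereas you split off $X\setminus A_n$ as a vanishing error term as in the proof of the Caccioppoli inequality.
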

\begin{proof}
 We apply our key estimate  Lemma~\ref{l:keyestimate} to $f$ with the cut-off function $\eta = \eta_{r,R}$. Then, the Cauchy-Schwarz inequality and the cut-off properties of $\eta$ yield 
 \begin{multline*}
		\left(\int_{A_n}f^{p-2}\eta^{2} d\Gm^{(c)}(f)+\int_{X\times X \setminus d}(f_n(x)\vee f_n(y))^{p-2}\eta(y)^{2}(f_n(x)-f_n(y))^{2}dJ(x,y)\right)^2\\
		 \leq \frac{C}{(R-r)^{2}}\av{f_n 1_{B_{R+s}\setminus B_{r-s}}}_{p}^{p} \Biggl(\int_{B_R \setminus B_r}  f_n^{p-2} \eta^2 d\Gm^{(c)}(f) \\
	\qquad+ \int_{U_{R+s} \setminus U_{r-s} }(f_n(x)\vee f_n(y))^{p-2} \eta^2(y)
		(f(x)-f(y))^{2} dJ(x,y) \Biggr).
	\end{multline*}
	 By our Caccioppoli-type inequality Theorem~\ref{t:Caccioppoli} all of the   integrals on the right side exist when $f_n$ is replaced by $f$ (here we use that $B_{R+s}$ is precompact, $0\leq \eta \leq 1$ with $\eta = 0$ on $X \setminus B_R$ and the integrability assumption on $f$). Hence, we can take the limit $n \to \infty$ to obtain the statement. 
\end{proof}

%

\section{Proof of Yau's and Karp's theorem and recurrence}
In this section we use the results from the previous sections to prove  Yau's and Karp's theorem, Theorem~\ref{c:Yau} and~\ref{t:Karp}. Later in this section we prove the growth test for recurrence, Theorem~\ref{c:recurrence}.

\subsection{Proof of Yau's and Karp's theorem}
The proofs of our Liouville theorems are based on the following observation.

\begin{lemma}\label{lemma:irreducible constant}
 Let $\cE$ be irreducible and let $f \in \dEl^*$ be non-negative. If for some $p \in (1,\infty)$
 $$\int_X f^{p-2} d\Gm^{(c)}(f) + \int_{X \times X \setminus d} (f(x) \vee f(y))^{p-2} (f(x) - f(y))^2 dJ(x,y) = 0,$$
 then $f$ is constant. 
\end{lemma}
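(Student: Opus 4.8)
The plan is to deduce $\Gm(f) = 0$ and then invoke Lemma~\ref{lemma:gamma 0 yields constant}, which by irreducibility forces $f$ to be constant. Since $f \geq 0$, both integrands appearing in the hypothesis are non-negative, so the vanishing of their sum forces each of the two integrals to vanish separately. It therefore suffices to show that the vanishing of the jump integral yields $\Gm^{(j)}(f) = 0$ and that the vanishing of the strongly local integral yields $\Gm^{(c)}(f) = 0$.

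For the jump part I start from
$$\int_{X\times X \setminus d}(f(x)\vee f(y))^{p-2}(f(x)-f(y))^{2}\,dJ(x,y) = 0.$$
As $J$ is a non-negative measure and the integrand is non-negative, it vanishes for $J$-a.e.\ $(x,y)$. On the set where $f(x)\vee f(y) > 0$ the weight $(f(x)\vee f(y))^{p-2}$ is positive, so the vanishing of the integrand forces $f(x)=f(y)$; on the complementary set $f(x)=f(y)=0$ holds trivially. Hence $f(x)=f(y)$ for $J$-a.e.\ $(x,y)$, and the defining relation $\int_K d\Gm^{(j)}(f) = \int_{K\times X\setminus d}(f(x)-f(y))^2\,dJ(x,y)$ gives $\int_K d\Gm^{(j)}(f) = 0$ for every compact $K$, i.e.\ $\Gm^{(j)}(f) = 0$.

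For the strongly local part I split the integral $\int_X f^{p-2}\,d\Gm^{(c)}(f) = 0$ over $\{f>0\}$ and $\{f=0\}$, which cover $X$ since $f\geq 0$. On $\{f>0\}$ the weight $f^{p-2}$ is positive and finite, so the vanishing of the integral forces $1_{\{f>0\}}\,d\Gm^{(c)}(f) = 0$. For the level set $\{f=0\}$ I use that the strongly local energy measure does not charge level sets of $f$: applying the truncation/chain rule for $\Gm^{(c)}$ to the contractions $(f-\eps)_+ \in \dEl^*$ and letting $\eps \downarrow 0$ (this is the truncation property already used in Lemma~\ref{l:keyestimate}, cf.\ \cite{Sturm94,FLW}) yields $d\Gm^{(c)}((f-\eps)_+) = 1_{\{f>\eps\}}\,d\Gm^{(c)}(f) \uparrow 1_{\{f>0\}}\,d\Gm^{(c)}(f)$ while $(f-\eps)_+ \uparrow f$, so that $1_{\{f=0\}}\,d\Gm^{(c)}(f) = 0$. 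Combining the two sets gives $\Gm^{(c)}(f) = 0$.

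Putting the pieces together yields $\Gm(f) = \Gm^{(c)}(f) + \Gm^{(j)}(f) = 0$, and Lemma~\ref{lemma:gamma 0 yields constant} then shows that $f$ is constant. The one genuinely delicate point is the level-set claim $1_{\{f=0\}}\,d\Gm^{(c)}(f) = 0$, which I expect to be the main obstacle, as it is the place where the strong locality of $\Gm^{(c)}$ must be invoked carefully in the $\dEl^*$ setting. I note that it is only needed when $p > 2$: for $p = 2$ the weight is identically $1$ and $\Gm^{(c)}(f)=0$ is immediate, while for $p < 2$ the finiteness of $\int_X f^{p-2}\,d\Gm^{(c)}(f)$ already forces $\Gm^{(c)}(f)(\{f=0\}) = 0$, since $f^{p-2} = +\infty$ on $\{f=0\}$.
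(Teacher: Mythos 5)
Your proof is correct and takes essentially the same route as the paper: both reduce the hypothesis to $\Gamma(f)=0$ and then invoke Lemma~\ref{lemma:gamma 0 yields constant}, using the positivity of the weights on $\{f>0\}$ (resp.\ on $\{f(x)\vee f(y)>0\}$) together with the truncation property of $\Gm^{(c)}$. The only difference is cosmetic: the paper disposes of the level set $\{f=0\}$ in one line via $\Gm^{(c)}(f)=\Gm^{(c)}(f_+)=1_{\{f>0\}}\Gm^{(c)}(f)$ from \cite[Appendix~4.1]{Sturm94}, so your $\eps$-limiting detour (whose convergence of energy measures would itself need a word of justification) can be replaced by that direct citation.
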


\begin{proof}
By the truncation property of the strongly local measure $\Gm^{(c)}$ and $f\geq 0$ we have 
$$\Gm^{(c)}(f) = \Gm^{(c)}(f_+)= 1_{\{f > 0\}}\Gm^{(c)}(f), $$
see \cite[Appendix~4.1]{Sturm94}. Hence,  $\int_X f^{p-2} d\Gm^{(c)}(f) = 0$ implies $\Gm^{(c)}(f) = 0$. Similarly, 
$\int_{X \times X \setminus d} (f(x) \vee f(y))^{p-2} (f(x) - f(y))^2 dJ(x,y) = 0$
implies $\int_{X \times X \setminus d}  (f(x) - f(y))^2 dJ(x,y) = 0$. These two observations and Lemma~\ref{lemma:gamma 0 yields constant} show the claim. 
\end{proof}

\begin{proof}[Proof of Yau's $L^{p}$-Liouville theorem, Theorem~\ref{c:Yau}]  The assumptions we made guarantee that we can apply the Caccioppoli-type  inequality Theorem~\ref{t:Caccioppoli} for all $0 < r < R$. In this inequality letting first $R \to \infty$ and then $r \to\infty$ yields
$$\int_X f^{p-2} d\Gm^{(c)}(f) + \int_{X \times X \setminus d} (f(x) \vee f(y))^{p-2} (f(x) - f(y))^2 dJ(x,y) = 0.$$
Hence, the previous lemma implies that $f$ is constant. 
\end{proof}

\begin{proof}[Proof of Karp's theorem, Theorem~\ref{t:Karp}] With Lemma~\ref{lemma:additional inequality} at hand we basically follow   \cite{Sturm94}. Suppose $f \neq 0$. Let $R \geq  4s$ be such that $f 1_{B_{R}} \neq 0$. For $n \in \N$, we define $R_n = 2^n R$, $\eta_n = \eta_{R_{n-1}+s,R_n-s}$, $v_n = \av{f 1_{B_{R_n} \setminus B_{R_{n-1}}}}_p^p$ and 
$$Q_n = \int_{B_{R_n}} f^{p-2} \eta_{n}^2 d\Gm^{(c)}(f) + \int_{U_{R_n}}  (f(x)\vee f(y))^{p-2} \eta_{n}^2(y) (f(x) - f(y))^2 dJ(x,y),$$
with $U_{r} = B_{r} \times B_r \setminus d$. The assumption $\int^\infty r/\av{f1_{B_r}}_p^p dr = \infty$ implies
$$\sum_{n = 1}^\infty \frac{R_n^2}{v_n} = \infty.$$
Once we show  $Q_1 = 0$, the assertion follows from Lemma~\ref{lemma:irreducible constant} after letting $R \to \infty$. 

Lemma~\ref{lemma:additional inequality} (applied to $r = R_{n-1}+s$ and $R = R_{n}+s$)  and $\eta_{n-1} \leq \eta_{n}$ yield 
\begin{align*}
 Q_{n-1}Q_n \leq Q_n^2 \leq \frac{C v_n}{(R_n - R_{n-1} - 2s)^2} \left(Q_n -  Q_{n-1}\right) \leq \frac{16 C v_n}{R_n^2}\left(Q_n -  Q_{n-1}\right).
\end{align*}
For the last inequality we used $R \geq 4s$ and $R_n = 2^nR$. If $Q_1 > 0$, this would imply 
$$\frac{R_n^2}{v_n}  \leq 16C \left( \frac{1}{Q_{n-1}}  - \frac{1}{Q_{n}}\right)$$
and hence 
$$\sum_{n = 2}^\infty \frac{R_n^2}{v_n} \leq  \frac{16C}{Q_{1}} < \infty,$$
a contradiction. 
\end{proof}

\subsection{Proof of the growth test for recurrence}
In this subsection we discuss how our version of Karp's theorem can be used to deduce  the volume growth test for recurrence Theorem~\ref{c:recurrence}.  As in the previous sections we assume that $\cE$ is a regular Dirichlet form without killing.

In order to prove the theorem we employ some abstract results from \cite{Kaj}, to which we also refer to for the following facts about extended Dirichlet spaces.  Recall that the extended Dirichlet space $D(\Ee)$ of $\cE$ is defined by
$$D(\Ee) = \{f \in L^0(m) \mid \text{ex. $\cE$-Cauchy sequence } (f_n) \text{ in }D(\cE) \text{ s.t. } f_n \to f\, m\text{-a.e.}\}.$$
A sequence $(f_n)$ as in the definition above is called {\em approximating sequence} for $f$. Let $f \in D(\Ee)$ and let  $(f_n)$ be an approximating sequence. We define
$$\Ee(f) = \lim_{n \to \infty} \cE(f_n).$$
This is indeed independent of the choice of the sequence $(f_n)$ such that $\Ee$ is an extension of $\cE$. Moreover, it holds that $D(\cE) = D(\Ee) \cap L^2(m)$. We need the following observation, which shows that our extension of the form via the measure-valued quadratic form $\Gamma$ is compatible with the extended Dirichlet space.

\begin{lemma}\label{lemma:compatibility}
 Let $f \in D(\Ee) \cap L^\infty(m)$. Then $f \in \dEl^*$ and 
 $$\Ee(f) = \int_X d\Gamma(f). $$
\end{lemma}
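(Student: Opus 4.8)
The plan is to prove the statement in two stages, first establishing $f \in \dEl^*$ and then identifying the two quantities. The natural starting point is an approximating sequence $(f_n)$ in $D(\cE)$ with $f_n \to f$ $m$-a.e.\ and $(f_n)$ being $\cE$-Cauchy. Since $f \in L^\infty(m)$, I would first replace $(f_n)$ by its truncations $\tilde{f}_n = (f_n \wedge \av{f}_\infty) \vee (-\av{f}_\infty)$. Because truncation is a normal contraction, the Markovian property of $\cE$ guarantees $\cE(\tilde{f}_n) \leq \cE(f_n)$, and one checks that $(\tilde f_n)$ is still $\cE$-Cauchy (this uses that the $f_n$ themselves $\cE$-converge and the contraction estimate applied to differences) and still converges $m$-a.e.\ to $f$. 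Thus we may assume the approximating sequence is uniformly bounded by $\av{f}_\infty$, which will be essential for controlling the jump integrals.

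For membership in $\dEl^*$, I would fix a compact set $K$ and show that $\int_{K \times X \setminus d} (f(x)-f(y))^2 dJ(x,y) < \infty$, together with $f \in \dEl$. The local-domain part should follow by standard extended-Dirichlet-space arguments: on a relatively compact open neighborhood $G$ of $K$ one produces a function in $D(\cE)$ agreeing with $f$ on $G$ by combining the approximating sequence with a cut-off. For the jump bound, I would use the Beurling--Deny decomposition applied to the $\tilde f_n$, which gives $\int_{K \times X \setminus d} (\tilde f_n(x) - \tilde f_n(y))^2 dJ \leq \cE(\tilde f_n) \leq \sup_n \cE(\tilde f_n) < \infty$ (the supremum being finite since $(\tilde f_n)$ is $\cE$-Cauchy hence $\cE$-bounded). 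Then Fatou's lemma, applied along the $m$-a.e.\ convergence $\tilde f_n \to f$ (which yields $\tilde f_n(x) - \tilde f_n(y) \to f(x)-f(y)$ for $J$-a.e.\ $(x,y)$, using that $J$ does not charge the diagonal and absolute continuity considerations), gives the required finiteness. This establishes $f \in \dEl^*$.

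For the equality $\Ee(f) = \int_X d\Gamma(f)$, the idea is to pass to the limit in $\cE(\tilde f_n) = \int_X d\Gamma^{(c)}(\tilde f_n) + \int_{X \times X \setminus d}(\tilde f_n(x) - \tilde f_n(y))^2 dJ(x,y)$, where I write the killing term as zero by our standing assumption $k = 0$. For the jump part, the uniform bound $|\tilde f_n| \leq \av{f}_\infty$ together with $m$-a.e.\ (hence $J$-a.e., after the usual quasi-continuity reductions) convergence should allow a dominated-convergence or Fatou-plus-Cauchy argument to give convergence to $\int (f(x)-f(y))^2 dJ$. For the strongly local part $\Gamma^{(c)}$, I would use that $(\tilde f_n)$ is $\cE$-Cauchy to deduce that $\Gamma^{(c)}(\tilde f_n)$ converges in an appropriate sense and that the energy measures converge to $\Gamma^{(c)}(f)$; here the truncation property and the fact that $f_K$ agrees with $f$ locally let me identify the limit with the extension $\Gamma^{(c)}(f)$ defined earlier. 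Summing the two limits and recalling $\Ee(f) = \lim_n \cE(\tilde f_n)$ gives the claim.

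The main obstacle I anticipate is the strongly local term. The jump term is comparatively soft because the integrand is an honest pointwise square that one controls by Fatou and the uniform bound, but the energy measure $\Gamma^{(c)}$ is only defined through a somewhat indirect localization procedure, and showing that $\Gamma^{(c)}(\tilde f_n) \to \Gamma^{(c)}(f)$ as measures (or at least that their total masses converge to $\int_X d\Gamma^{(c)}(f)$) requires care. The key leverage will be that $\cE$-convergence of $(\tilde f_n)$ controls the total energy, so the total masses $\int_X d\Gamma^{(c)}(\tilde f_n)$ stay bounded and Cauchy; combined with weak-type convergence of the energy measures on precompact sets and the compatibility of the local extension, this should pin down the limit. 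I would lean on the references \cite{FLW} and \cite{Kaj} for the precise statements about energy-measure convergence under $\cE$-convergence, rather than reproving them.
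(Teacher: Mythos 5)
Your first half (membership in $\dEl^*$) follows essentially the paper's route: a cut-off argument for $f \in \dEl$ and Fatou's lemma for the jump integral, applied to a uniformly bounded approximating sequence. Two small imprecisions there. First, the truncated sequence $(\tilde f_n)$ need not be $\cE$-Cauchy -- $\tilde f_n - \tilde f_m$ is not a normal contraction of $f_n - f_m$ -- and the standard repair is via Ces\`aro means (\cite[Theorem~1.4.2]{FOT}); the paper's ``without loss of generality'' hides the same step. Second, $m$-a.e.\ convergence of $\tilde f_n \to f$ does not give $J$-a.e.\ convergence of $\tilde f_n(x)-\tilde f_n(y)$, since $J$ need not be absolutely continuous with respect to $m\otimes m$; the correct mechanism, used in the paper, is to pass to quasi-continuous representatives converging quasi-everywhere and to use that $J$ does not charge sets of capacity zero. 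Both points are fixable, so I would not call them fatal.

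The genuine gap is in the identity $\Ee(f) = \int_X d\Gamma(f)$, exactly at the point you flag as the main obstacle. For the jump term, dominated convergence as you state it fails (the constant $(2\av{f}_\infty)^2$ is not $J$-integrable near the diagonal), though that term can still be closed by observing that $(\tilde f_n(x)-\tilde f_n(y))_n$ is Cauchy in $L^2(J)$ and identifying its limit q.e. For the strongly local term, however, knowing that the total masses $\int_X d\Gamma^{(c)}(\tilde f_n)$ are Cauchy only gives convergence to \emph{some} number $\ell$; identifying $\ell$ with $\int_X d\Gamma^{(c)}(f)$, where the right-hand side is defined only through the local extension, is the nontrivial content, and ``weak-type convergence on precompact sets'' cannot deliver it: vague convergence of the energy measures together with convergence of total masses yields only $\int_X d\Gamma^{(c)}(f) \leq \ell$, since mass may escape to infinity. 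The paper closes this with one key inequality your sketch lacks, namely $\int_X d\Gamma(g) \leq \Ee(g)$ for every $g \in D(\Ee)\cap L^\infty(m)$. It is obtained from $\cE(\varphi g) - \cE(\varphi g^2,\varphi) \leq \Ee(g)$ (\cite[Theorem~3.1]{Schmidt20}) combined with the Leibniz rules, which give $\cE(\varphi g)-\cE(\varphi g^2,\varphi) = \int_X \varphi^2\, d\Gamma^{(c)}(g) + \int_{X\times X\setminus d}\varphi(x)\varphi(y)(g(x)-g(y))^2\,dJ(x,y)$, followed by letting $\varphi \nearrow 1$. Applied to $g = f - f_n$ this yields $\int_X d\Gamma(f-f_n) \leq \Ee(f-f_n) \to 0$, and since $\Gamma$ is a quadratic form the triangle inequality then gives $\int_X d\Gamma(f_n) \to \int_X d\Gamma(f)$. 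The references you propose to lean on, \cite{FLW} and \cite{Kaj}, do not contain a statement of this kind, so without this ingredient your argument for the strongly local part does not close.
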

\begin{proof}
 Let $f \in D(\Ee) \cap L^\infty(m)$. We first show $f \in \dEl$. For a compact $K \subseteq X$ we choose $\varphi \in  D(\cE) \cap C_c(X)$ with $0 \leq \varphi  \leq 1$ and $\varphi = 1$ on $K$. According to \cite[Exercise~1.4.1]{FOT} such a function always exists. Then $\varphi f = f$ on $K$ and $\varphi f \in L^2(m)$. Since $D(\Ee) \cap L^\infty(m)$ is an algebra and $D(\Ee) \cap L^2(m) = D(\cE)$, we obtain $\varphi f \in D(\cE)$. This shows  $f \in \dEl$.

 Now let $(f_n)$ be an approximating sequence for $f$. Without loss of generality $\av{f_n}_\infty \leq \av{f}_\infty$ and according to \cite[Theorem~2.1.7]{FOT} we can additionally assume $f_n \to f$ q.e. (recall that we always choose quasi-continuous representatives). Since the jump measure does not charge sets of capacity $0$, Fatou's lemma yields 
 \begin{align*}
  \int_{X \times X \setminus d} (f(x) - f(y))^2 dJ(x,y) &\leq \liminf_{n \to \infty}\int_{X \times X \setminus d} (f_n(x) - f_n(y))^2 dJ(x,y)\\
  &\leq \liminf_{n \to \infty} \cE(f_n) \\
  &= \Ee(f) < \infty. 
 \end{align*}
 In particular, this implies $f \in \dEl^*$.

 We will show $\int_X d\Gamma(f_n) \to \int_X d\Gamma(f)$ as this yields
 $$\int_X d\Gamma(f) = \lim_{n \to \infty} \int_X d\Gamma(f_n) = \lim_{n \to \infty}\cE(f_n) = \Ee(f).$$

 According to \cite[Theorem~3.1]{Schmidt20}, for $\varphi \in D(\cE) \cap C_c(X)$ with $0 \leq \varphi \leq 1$ and $g \in D(\Ee) \cap L^\infty(m)$ we have (using  $\varphi g, \varphi g^2 \in D(\cE) \cap L^\infty(m)$, see first part of the proof)
 $$\cE(\varphi g) - \cE(\varphi g^2,\varphi) \leq \Ee(g).$$ 
 Using the Leibniz rule for $\Gamma^{(c)}$ and the discrete Leibniz rule for the integration with respect to $J$ we   obtain 
 $$ \int_X \varphi^2 d\Gamma^{(c)}(g) +   \int_{X \times X \setminus d} \varphi(x)\varphi(y) (g(x) - g(y))^2 dJ(x,y) = \cE(\varphi g) - \cE(\varphi g^2,\varphi).$$
 Letting $\varphi \nearrow 1$ this implies 
 $$\int_X d\Gamma(g)  \leq \Ee(g).$$
 In particular, since the LHS is a quadratic form in $g$, for $(g_n)$ in $D(\Ee) \cap L^\infty(m)$ the convergence $g_n \to g$ with respect to $\Ee$ implies $\int_X d\Gamma(g_n) \to \int_X d\Gamma(g)$. 
 
 Since $(f_n)$ is an approximating sequence for $f$, the lower semicontinuity of $\Ee$ with respect to $m$-a.e. convergence, see e.g. \cite[Lemma~2.3]{Schmidt20}, yields 
 $$\Ee(f - f_n) \leq \liminf_{m \to \infty} \Ee(f_m - f_n) =  \liminf_{m \to \infty} \cE(f_m - f_n) \to 0, \text{ as } n \to \infty. $$
 Hence, we obtain the claimed convergence of $\Gamma(f_n)$ to $\Gamma(f)$.
\end{proof}
 \begin{proof}[Proof of the volume growth test for recurrence, Theorem~\ref{c:recurrence}]
  According to \cite[Theorem~1 and Proposition~3]{Kaj} it suffices to show that any non-negative $h \in D(\Ee) \cap L^\infty(m)$ with $\Ee(h,\varphi) \geq 0$ for all nonnegative $\varphi \in D(\cE)$ is constant (such functions are called excessive). By Lemma~\ref{lemma:compatibility} such a function $h$ satisfies $h \in \dEl^*$ and 
  $$\int_X d\Gamma(h,\varphi) \geq 0$$
  for all non-negative $\varphi \in D(\cE)_c$. Now assume without loss of generality $\av{h}_\infty \leq 1$. Since $\Gamma(1) = 0$, the function $f = 1-h$ is nonnegative and subharmonic in our sense. Moreover, $\av{f1_{B_{r}}}_p^p \leq \av{f}_\infty^p m(B_r).$ Hence, the assumption on $m(B_r)$ and Theorem~\ref{t:Karp} yield that $f$ is constant. 
 \end{proof}

\bibliography{Lqharmonic2}

\begin{thebibliography}{10}

\bibitem{Cauchy}
Augustin-Louis Cauchy.
\newblock {\em Analyse alg\'{e}brique}.
\newblock Cours d'Analyse de l'\'{E}cole Royale Polytechnique. [Course in
  Analysis of the \'{E}cole Royale Polytechnique]. \'{E}ditions Jacques Gabay,
  Sceaux, 1989.
\newblock Reprint of the 1821 edition.

\bibitem{Davies}
E.~B. Davies.
\newblock {\em Heat kernels and spectral theory}, volume~92 of {\em Cambridge
  Tracts in Mathematics}.
\newblock Cambridge University Press, Cambridge, 1990.

\bibitem{DM80}
Claude Dellacherie and Paul-Andr\'{e} Meyer.
\newblock {\em Probabilit\'{e}s et potentiel. {C}hapitres {V} \`a {VIII}}.
\newblock Actualit\'{e}s Scientifiques et Industrielles [Current Scientific and
  Industrial Topics], No. 1385. Hermann, Paris, revised edition, 1980.
\newblock Th\'{e}orie des martingales. [Martingale theory].

\bibitem{FLW}
Rupert~L. Frank, Daniel Lenz, and Daniel Wingert.
\newblock Intrinsic metrics for non-local symmetric {D}irichlet forms and
  applications to spectral theory.
\newblock {\em J. Funct. Anal.}, 266(8):4765--4808, 2014.

\bibitem{Fuk82}
M.~Fukushima.
\newblock A note on irreducibility and ergodicity of symmetric {M}arkov
  processes.
\newblock In {\em Stochastic processes in quantum theory and statistical
  physics ({M}arseille, 1981)}, volume 173 of {\em Lecture Notes in Phys.},
  pages 200--207. Springer, Berlin, 1982.

\bibitem{FOT}
Masatoshi Fukushima, Yoichi Oshima, and Masayoshi Takeda.
\newblock {\em Dirichlet forms and symmetric {M}arkov processes}, volume~19 of
  {\em De Gruyter Studies in Mathematics}.
\newblock Walter de Gruyter \& Co., Berlin, extended edition, 2011.

\bibitem{HuaJost13}
Bobo Hua and J\"{u}rgen Jost.
\newblock {$L^q$} harmonic functions on graphs.
\newblock {\em Israel J. Math.}, 202(1):475--490, 2014.

\bibitem{HuaKeller13}
Bobo Hua and Matthias Keller.
\newblock Harmonic functions of general graph {L}aplacians.
\newblock {\em Calc. Var. Partial Differential Equations}, 51(1-2):343--362,
  2014.

\bibitem{Kaj}
Naotaka Kajino.
\newblock Equivalence of recurrence and {L}iouville property for symmetric
  {D}irichlet forms.
\newblock {\em Mat. Fiz. Komp' yut. Model.}, (3(40)):89--98, 2017.

\bibitem{Karp82b}
L.~Karp.
\newblock {Subharmonic functions on real and complex manifolds}.
\newblock {\em Math. Z.}, 179(4):535--554, 1982.

\bibitem{KLVW}
Matthias Keller, Daniel Lenz, Hendrik Vogt, and Rados\l~aw Wojciechowski.
\newblock Note on basic features of large time behaviour of heat kernels.
\newblock {\em J. Reine Angew. Math.}, 708:73--95, 2015.

\bibitem{MU11}
Jun Masamune and Toshihiro Uemura.
\newblock {$L^p$}-{L}iouville property for non-local operators.
\newblock {\em Math. Nachr.}, 284(17-18):2249--2267, 2011.

\bibitem{Schmidt17}
M.~Schmidt.
\newblock {Energy forms}.
\newblock {\em Dissertation, rXiv:1703.04883}, 2017.

\bibitem{Schmidt20}
Marcel Schmidt.
\newblock A note on reflected {D}irichlet forms.
\newblock {\em Potential Anal.}, 52(2):245--279, 2020.

\bibitem{Sturm94}
K.-T. Sturm.
\newblock {Analysis on local Dirichlet spaces. I. Recurrence, conservativeness
  and $L^p$- Liouville properties}.
\newblock {\em J. Reine Angew. Math.}, 456:173--196, 1994.

\bibitem{Yau76}
S.~T. Yau.
\newblock {Some function-theoretic properties of complete Riemannian manifold
  and their applications to geometry}.
\newblock {\em Indiana Univ. Math. J.}, 25(7):659--670, 1976.

\end{thebibliography}
\bibliographystyle{plain}

\end{document}